\documentclass[11pt]{article}
\usepackage{amsfonts}
\usepackage{amsmath,amsthm,mathtools,amscd,amssymb,mathrsfs,setspace}
\usepackage{latexsym, epsf, epsfig}
\usepackage{color}
\usepackage[hmargin=.95in,vmargin=.95in]{geometry}
\usepackage{graphicx}
\usepackage{hyperref}
\usepackage{cite}
\usepackage{multirow}
\usepackage{xcolor}

\usepackage{float} 
\usepackage{cancel} 
\newcommand{\nn}{\nonumber}

\newcommand{\cA}{{\mathcal{A}}}

\newcommand{\cE}{{\mathcal{E}}}

\newcommand{\bu}{\mathbf u}
\newcommand{\bv}{\mathbf v}
\newcommand{\bw}{\mathbf w}
\newcommand{\bF}{\mathbf F}

\newcommand{\bV}{\mathbf V}

\newcommand{\grad}{\nabla}
\newcommand{\Om}{\Omega}

\renewcommand{\vec}[1]{\mathbf{#1}}

\newcommand{\btau}{\boldsymbol{\tau}}
\newcommand{\bxi}{\boldsymbol{\xi}}
\newcommand{\bzeta}{\boldsymbol{\zeta}}
\newcommand\restri[1]{\left.{#1}\right|_{\Gamma_I}}
\newcommand\weakto\rightharpoonup

\theoremstyle{plain}
\newtheorem{theorem}{Theorem}[section]

\newtheorem{corollary}[theorem]{Corollary}

\newtheorem{definition}{Definition}
\theoremstyle{remark}
\newtheorem{remark}{Remark}[section]
\numberwithin{equation}{section} \numberwithin{theorem}{section}
\numberwithin{remark}{section} 
\providecommand{\doititle}[1]{#1}

\begin{document}

\title{Weak Solutions and Inertial Limits for Quasi-static Filtrations}
\author{{\footnotesize
\begin{tabular}[t]{c@{\extracolsep{3em}}c@{\extracolsep{3em}}c}
    Peter Lavagnino & Arum Lee & Justin T. Webster \\
    \it UMBC \hskip1.3cm &
      \hskip.4cm \it UMBC \hskip1.3cm &
      \hskip.4cm \it UMBC\\
    \it Baltimore, MD & \it Baltimore, MD & \it Baltimore, MD \\
    \texttt{lavagn1@umbc.edu} & \texttt{leearum9@umbc.edu} & \texttt{websterj@umbc.edu} \\
\end{tabular}}}
\maketitle

\begin{abstract}
\noindent  A quasi-static filtration system, comprising a poroelastic solid coupled to an incompressible free-flow, is considered in 3D. Across a flat 2D interface, the Beavers-Joseph-Saffman coupling conditions are taken. The system constitutes a doubly elliptic-parabolic coupling and can be seen as a degenerate case of the inertial Biot-Stokes dynamics. These dynamics cannot be easily recovered by simply sending the inertial parameters to zero; however, inserting a viscoelastic regularization of the inertial Biot system allows us to construct weak solutions in the inertial limit. Subsequently, we can pass to the limit in the regularization parameter to obtain quasi-static weak solutions. This addresses an open singular/degenerate limiting problem in the class of filtration models, and permits future analyses of uniqueness and regularity of solutions. This linear construction also provides a foundation for the later incorporation of physically-motivated nonlinear poroelastic effects via fixed point methods.

\vskip.25cm
\noindent {\bf Keywords}: {fluid-poroelastic-structure interaction, Biot-Stokes system, filtration problem, Beavers-Joseph-Saffman, implicit degenerate evolution}
\vskip.25cm
\noindent
{\em\bf  2020 AMS MSC}: 74F10, 76S05, 35M13, 76M30, 35D30
\vskip.25cm
\noindent {\bf Acknowledgments}: The second author was partially supported by NSF-DMS 2307538. The third author was partially supported by NSF-DMS 2606567.
\end{abstract}

\section{Introduction}
In this treatment we consider a linear filtration system of a fluid free-flow and a saturated poroelastic solid. Our primary motivation is the work in \cite{showfiltration}, where an early analysis of filtrations is provided in the inertial and non-degenerate regime (with fluid compressibility). In that work, quasi-static and degenerate regimes are mentioned in the context of ``singular limits" \cite[p.13]{showfiltration}, but solutions are not constructed for quasi-static, incompressible dynamics. Unlike some previous work (such as \cite{filtration2} or that  involving the third author \cite{AGW,AW}), our primary focus here is therefore on the {\em fully degenerate} filtration system---that is, a filtration which is entirely quasi-static and incompressible. This system of interest is thus doubly elliptic-parabolic and coupled across a lower-dimensional interface via dynamic and mixed conditions of a dissipative nature: the so-called Beavers-Joseph-Saffman slip conditions \cite{mikelicBJS}. This system exhibits several low-regularity features, described below, which make its analysis challenging. We present the equations here, but relegate a detailed discussion of their structure to Section \ref{coupled1}. 

Denoting $\Omega_b$ as the poroelastic domain and $\Omega_f$ as the free fluid domain (bounded subsets of $\mathbb R^3$), the interior dynamics evolve according to the (elliptic-degenerate parabolic-elliptic) equations
\begin{align}\label{introsystem}
   - \mu\Delta\vec{u} - (\lambda + \mu)\nabla(\nabla\cdot\vec{u}) + \alpha \nabla p_b = \vec{F}_b, &\text{ in } \Omega_b \times (0,T),\\ \label{biot2}
    [c_0p_b + \alpha \nabla \cdot \vec{u}]_t - \nabla \cdot [k\nabla p_b] = S, &\text{ in } \Omega_b \times (0,T)\\
    -\nu\Delta \bv +\nabla p_f=\mathbf F_f;~~\nabla \cdot \bv = 0, & \text{ in } \Omega_f \times (0,T).
\end{align}

\begin{figure}[htbp]
    \centering
    \includegraphics[width=.55\textwidth]{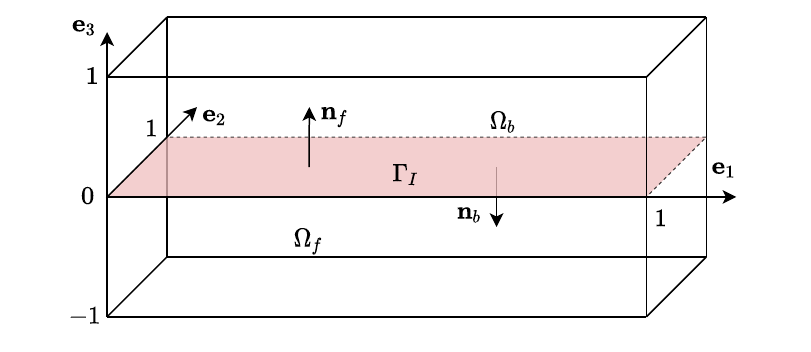}
    \caption{Simplified geometric configuration of the coupled Biot-Stokes filtration model for the poroelastic domain $\Omega_b=(0,1)^3$ and the adjacent fluid domain $\Omega_f=(0,1)\times(0,1)\times(-1,0)$. The two domains meet at the interface $\Gamma_I$, where $\vec{n}_f=\vec{e}_3$ denotes the unit outward normal to $\Omega_f$. Future work will address more general geometries. 
 }
    \label{fig:biot-stokes-geometry}
\end{figure}

The Biot variables are $\bu$ and $p_b$, denoting poroelastic displacement and pressure (resp.), and the fluid variables are $\bv$ and $p_f$, likewise denoting fluid velocity and pressure.
The parameters $\lambda,\mu>0$ are the poroelastic structure's Lam\'e coefficients, while $k >0$ measures the permeability of the porous matrix. The coefficient $c_0\ge 0$ represents the storage coefficient for the Biot dynamics and $\alpha >0$ the Biot-Willis constant \cite{coussy}. The kinematic viscosity of the free flow is $\nu>0$.

Across a 2D interface $\Gamma_I$ separating $\Omega_f$ and $\Omega_b$ ($\Gamma_I \subset \partial \Omega_f \cap \partial\Omega_b$), we have the conditions
\begin{align}
    k\nabla p_b\cdot\mathbf n_f &=- \vec{v} \cdot \mathbf n_f +
    \vec{u}_t \cdot \mathbf n_f, \label{IC1}\\
    \beta(\vec{v} - \vec{u}_t)\cdot \btau &= \btau \cdot \sigma_f \mathbf n_f,\label{IC2}\\
    \sigma_f\vec{e}_3 &= \sigma_b\vec{e}_3,\label{IC3}\\
    p_b &= -\mathbf n_f \cdot \sigma_f\mathbf n_f 
    \label{IC4}
\end{align}
\noindent We use the notation of  $\btau$ generically for  tangential vectors on $\Gamma_I$ and we let  $\mathbf n_f$ represent the unit outward normal to the fluid domain $\Omega_f$. Above, $\sigma_b$ and $\sigma_f$ represent poro(visco)elastic and fluid stress (resp.) tensors.

The central challenges in the analysis revolve around the implicit degeneracy in the problem, as well as the nature of the coupling (low regularity). The most obvious challenge in the analysis of Biot dynamics concerns the regularity of the poroelastic velocity $\bu_t$. Standard approaches (also for thermoelastic dynamics) necessitate using $\bu_t$ as a test function to obtain an a priori (energy) identity; however, weak solutions corresponding to the energy identity do not support any regularity of that very velocity. This is the heart of {\em degeneracy} in this context. 
For this reason, construction of solutions is problematic, and while good a priori estimates exist for the degenerate (limit system), a weak solution does not possess the requisite regularity to obtain the estimates from the weak formulation. This is a problem which has been noted several times in recent literature concerning hyperbolic-parabolic models \cite{rectplate,multilayered, sunny3} and, in particular, poroelastic dynamics \cite{bmw,bw}. There are several approaches which may be viable, involving implicit degenerate theory or temporal/spatial discretization to provide a construction. Here we opt to rely on recently established semigroup theory in the inertial and non-degenerate regimes. We therefore view the central dynamics of interest as singular limits of the inertial system.

To rigorously utilize previous works on the inertial filtration dynamics \cite{AGW,AW,bmw2}, we  adopt a vanishing inertial limit approach for the construction of weak solutions. In general, this is challenging for hyperbolic models, or any models without a strong notion of intrinsic dissipation. To circumvent associated challenges, we will introduce viscoelasticity (Kelvin-Voigt/strong, dissipation) into the Biot dynamics \cite{bmw2}, as is now common in their analysis \cite{bgsw, rectplate,sunny3}. The auxiliary model we utilize is
\begin{align}\label{introsystem2}
 \rho_b\bu_{tt}  - \mu\Delta(\vec{u}+\delta\bu_t) - (\lambda + \mu)\nabla(\nabla\cdot(\vec{u}+\delta \bu_t)) + \alpha \nabla p_b = \vec{F}_b, &\text{ in } \Omega_b \times (0,T),\\ \label{biot2*}
    [c_0p_b + \alpha \nabla \cdot \vec{u}]_t - \nabla \cdot [k\nabla p_b] = S, &\text{ in } \Omega_b \times (0,T)\\
  \rho_f\bv_t  -\nu\Delta \bv +\nabla p_f=\mathbf F_f;~~\nabla \cdot \bv = 0, & \text{ in } \Omega_f \times (0,T).
\end{align}
Here $\rho_b,\rho_f >0$ are the inertial parameters, and $\delta>0$ is a parameter which measures the strength of linear viscoelasticity present in Biot's dynamics. 
The damped, inertial dynamics admit a semigroup representation, and thus strong and mild (or semigroup) solutions are available \cite{pazy}---this is described in depth in the Appendix. The semigroup will permit us to construct quasi-static solutions via the aforementioned two-fold inertial limit. The use of Kelvin-Voigt regularization is motivated especially by the recent analysis of Biot dynamics in \cite{bgsw,bmw2}, where viscoelasticity provides additional time regularity, especially in degenerate and quasi-static regimes. Here the regularization is used not as part of the final model, but as an auxiliary device for obtaining a limit which is rather inaccessible from  baseline ``undamped'' a priori estimates. In the presence of viscoelasticity, one can pass to the limit in $\rho_b$ and $\rho_f$, holding $\delta>0$ fixed. Subsequently, a priori estimates can be obtained from the quasi-static, viscoelastic system (independent of the inertial parameters), and in a final step, the damping parameter $\delta$ can be sent to zero.

This strategy reflects that, at the level of limit passage, the strength of damping is a central issue in degenerate dynamics. Indeed, in an inertial system with hyperbolic-like components one must have control of the wave-type velocity, and said control may not be available from  energy estimates. One may hope to recover relevant estimates indirectly through mechanisms analogous to those appearing in thermoelasticity \cite{thermo}, but such arguments are strongly tied to geometry and often require methods of microlocal analysis. Our route through viscoelastic regularization is more direct. Standard baseline energy estimates for the undamped and quasi-static system are sufficient for the final construction of weak solutions for the problem at hand: $\delta=\rho_b=\rho_f=0$. As a byproduct of our approach, we will produce some auxiliary results for the viscoelastic filtration system, akin to those in \cite{bgsw} for the isolated poroelastic dynamics.

The literature on fluid-poroelastic interaction has developed along several related, but technically distinct, lines. The foundational poromechanics background goes back to the consolidation theory of Biot and Terzaghi \cite{biot,biot2,terzaghi}, with modern continuum treatments in, for instance, \cite{coussy,poroapps}. On the PDE side, Showalter's early work on implicit and degenerate evolution equations and poroelasticity \cite{indiana,show2000} provides a natural abstract framework for treating Biot's equations in several regimes, while later filtration work coupled poroelastic dynamics to  fluid free flows \cite{showfiltration}. More recently, weak-solution theories for nonlinear and degenerate poroelasticity have been developed in \cite{bgsw,bw,bmw,bmw2}, including models with incompressible constituents and with viscoelastic regularization. Even for stand-alone Biot dynamics, time regularity in the quasi-static setting is subtle \cite{bw,bmw,bmw2}; in the present 3D-3D multiphysics setting, the low-regularity interface conditions add a layer of difficulty. Multilayered and moving-boundary  fluid-poroelastic structure interactions, motivated in part by biomechanics, have been studied in \cite{multilayered,rectplate,sunny3}; related reduced or application-driven models appear in \cite{canicbio1,sunny1,sunny2,GGbook}. These works form a broader background for the present analysis: the exact filtration problem studied here is linear and posed on fixed domains, but the main analytical difficulty arises from the degenerate, mixed-type character of these dynamics. The low regularity intrinsic to the system confounds standard approaches to constructing weak solutions from baseline energy estimates.  

There is also a substantial numerical and modeling literature for Stokes-Biot and Navier-Stokes-Biot couplings. This includes Lagrange-multiplier and mixed finite element formulations \cite{yotovLagrange,yotov2022,yotovMultipoint,yotovNSB}, nonlinear  variants \cite{filtration2}, partitioned and operator-splitting methods \cite{bukac,filtnum}, and other formulations \cite{yotovBSeye}. The adjacent Stokes-Darcy and Navier-Stokes-Darcy literature, including \cite{infsup1,mikelicBJS}, supplies some mathematical infrastructure for Beavers-Joseph-Saffman interface conditions;  homogenization and dimension-reduction studies such as  \cite{auriault,Sanchez-Palencia,jagerr,jnr,gilbert3,gilbert4} provide additional context for poroelastic models, filtration models, and applications. 

 The work at hand does not pursue modeling, numerical approximation, or homogenization. Rather, we isolate the quasi-static Biot-Stokes filtration problem with incompressible free flow and develop a weak-solution theory by means of viscoelastic regularization in the inertial limit. We believe this is a first construction of weak solutions, based on finite energy considerations, in the case of quasi-static filtrations. Although linear, the construction we present here is a necessary step toward nonlinear extensions. In particular, following the works \cite{bgsw,multilayered}, one must have a viable linear solution theory in hand---robust to variable permeability---in order to extend the weak solution theory through fixed point or other such methods \cite{bw,bmw}. More broadly, as inertial effects are central when considering nonlinear coupling in multiphysics systems (e.g. \cite{sunny3}), and other mixed low-regularity systems, such as magneto-elastic or fluid-structure problems, it is essential to understand when and how quasi-static weak solutions can be obtained through inertial limits. We provide further discussion of the main result and its relation to the existing literature in Section \ref{subsection:main_result}.

The remainder of the paper is organized as follows. Section~\ref{coupled1} provides a detailed description of the coupled Biot--Stokes filtration model, including the geometry, boundary conditions, and Beavers--Joseph--Saffman interface conditions. Section \ref{section:energy_space_weak_solution} introduces the relevant energy identities and function spaces and gives the definition of weak solutions used throughout the paper. Section \ref{section:main_result} states the main existence theorem---theorem \ref{th:main1}---for the quasi-static filtration problem and reviews the supporting well-posedness results for the inertial, viscoelastic system. In Section~\ref{section:proof_of_main_theorem}, we prove the main result through a sequence of singular limits: first, the inertial parameters $\rho_b$ and $\rho_f$ are sent to zero with the viscoelastic parameter $\delta>0$ fixed; subsequently, $\delta$ is sent to zero to recover the fully quasi-static, undamped system, with the storage-degenerate case $c_0=0$ obtained by a final limiting argument. The Appendix provides the semigroup formulation for the inertial, viscoelastic Biot--Stokes system, including the pressure elimination, the definition of the associated generator, and its operator domain.

\section{Detailed Description of the Model}\label{coupled1}
We consider Biot-Stokes dynamics and now provide some background. We follow the exposition  in \cite{GGbook,show2000,bgsw}, and note that the physical configuration at hand is described in \cite{AGW,AW} (see also \cite{showfiltration,filtration2}). Recall that $\Omega_b \subset \mathbb{R}^3$ denotes the fully-saturated poroelastic structure, which we assume to be an isotropic and homogeneous porous medium, undergoing small displacements. In this scenario, such dynamics are  modeled by Biot's equations \cite{biot,biot2,terzaghi,coussy}. The function $\bF_b$ represents a volumetric force on the elastic matrix, and $S$ represents a {fluid source}. We assume the elastic  stress  $\sigma^E(\vec{u})$ obeys the linear strain-displacement law \cite{kesavan,show2000} given by
\begin{equation}\label{elasticstress}
    \sigma^E(\vec{u}) = 2\mu\vec{D}(\vec{u}) + \lambda(\nabla\cdot\vec{u})\vec{I},
\end{equation}
where $\vec{D}(\vec{u}) = \frac{1}{2}(\nabla\vec{u} + (\nabla\vec{u})^T)$ is the symmetrized gradient \cite{kesavan}.

Then the Biot component of the system on $\Omega_b$ is modeled by the following equations:
\begin{align}\label{biot1**}
\rho_b\bu_{tt}   - \text{div}~\sigma^E(\bu)-\delta \text{div}~\sigma^E(\bu_t) + \alpha \nabla p_b = \vec{F}_b, &\text{ in } \Omega_b \times (0,T),\\ \label{biot2**}
    [c_0p_b + \alpha \nabla \cdot \vec{u}]_t - \nabla \cdot [k\nabla p_b] = S, &\text{ in } \Omega_b \times (0,T).
\end{align}
We recall the physical parameters: $\lambda,\mu>0$ are the  Lam\'e coefficients of elasticity \cite{kesavan}, while $\rho_b \ge 0$ captures the Biot mass density; $\delta \ge 0$ measures Kelvin-Voigt damping in the homogenized matrix \cite{bgsw,bmw2};
     $\alpha>0$ is the  {Biot-Willis}  constant \cite{biot2,coussy}, scaled to the system at hand;
     $c_0\ge 0$ is the {storage coefficient} corresponding to  net {compressibility of constituents} ($c_0=0$ represents {incompressible constituents}) \cite{coussy,show2000};
     $k > 0$ is the permeability of the porous matrix.
The {fluid content} of the system is given  by ~$ \zeta = c_0p_b + \alpha \nabla \cdot\vec{u},$
and measures the local fluid mass \cite{show2000}. The discharge velocity (or Darcy flux) is $\vec{q}$, given through Darcy's law $\vec{q} = -k\nabla p_b$. 
 
The free fluid in $\Omega_f$ is modeled by the Eulerian velocity $\vec{v}$, fluid pressure $p_f$, and fluid source $\mathbf F_f$:
\begin{equation}\label{stokes1}
 \rho_f\bv_t  - 2\nu \text{div}~\mathbf D(\vec{v}) + \nabla p_f = \vec{F}_f,\quad \quad \nabla\cdot\vec{v} = 0, \quad  \text{ in } \Omega_f \times (0,T).
\end{equation}
Again, $\rho_f \ge 0$ represents the mass density of the free fluid, and $\nu>0$ the kinematic viscosity.

{\bf As a standing hypothesis for the remainder of the paper, we take $\nu,\alpha, \lambda,\mu>0$, while $\rho_f,\rho_b, \delta, c_0 > 0$, with the  possibility of each of the latter vanishing.}
As with previous recent work \cite{multilayered, AW,AGW}, we simplify the topology of the filtration domain to focus on the interactive dynamics across the interface, $\Gamma_I$. Future work will address arbitrary, physical geometries and the associated technical issues surrounding regularity, boundary triple points, and boundary traces. Namely, we here identify  $x_1$ and $x_2$  sides through laterally-periodic boundary conditions. 
 
 Now---and for the remainder of the paper---we take $\Omega_b \equiv (0,1)^3$ to be the fully-saturated poroelastic structure. 
Let $\Omega_f \equiv  (0,1)\times (0,1) \times (-1,0)$ be a region adjacent to $\Omega_b$,  filled with the fluid described by the incompressible Stokes equations. The geometric configuration of the two domains is illustrated above in Figure~\ref{fig:biot-stokes-geometry}.
The two regions are adjoined at an interface $\Gamma_I = \partial\Omega_b \cap \partial \Omega_f = (0,1)^2$. We denote the normal vectors going out of the Biot and Stokes regions by $\vec{n}_b$ and $\vec{n}_f$, respectively, with  $\vec{n}_f  = \vec{e}_3 = -\vec{n}_b$ on $\Gamma_I$. Denote the Biot and Stokes boundaries by
\begin{equation} \Gamma_b = \{\phi \in \partial \Omega_b~:~x_3=1\}~\text{ and }~ \Gamma_f = \{\phi \in \partial \Omega_f~:~x_3=-1\},\end{equation} 
with homogeneous  conditions on these boundaries:
\begin{equation}\label{BC1}
    \vec{u} = \vec{0} ~~\text{ and }  ~~p_b = 0 ~\text{ on }\Gamma_b, \quad \text{ and } ~~\vec{v} = \vec{0} ~\text{ on } \Gamma_f, \quad \forall\, t \in (0,T),
\end{equation}
and periodic conditions on the four lateral faces (i.e., in the $x_1$ and $x_2$ directions). 

Denoting the {\em total} poroelastic and fluid stress tensors as
\begin{align*}
    \sigma_b = \sigma_b(\vec{u},p_b) &\equiv  \sigma^E(\vec{u}) +\delta \sigma^E(\bu_t)- \alpha p_b\vec{I}, \quad \quad \sigma_f = \sigma_f(\vec{v},p_f) \equiv 2\nu\vec{D}(\vec{v}) - p_f\vec{I},
\end{align*}
respectively, we adapt the interface conditions to our conventions on $\Gamma_I\times (0,T)$:
\begin{align}
     k\nabla p_b\cdot\vec{e}_3
    &= -\vec{v}\cdot\vec{e}_3
    + \vec{u}_t\cdot\vec{e}_3 \label{IC1*}\\
    \beta(\vec{v} - \vec{u}_t)\cdot \btau &= -\btau \cdot \sigma_f \vec{e}_3,\label{IC2*}\\
    \sigma_f\vec{e}_3 &= \sigma_b\vec{e}_3,\label{IC3*}\\
    p_b &= -\vec{e}_3 \cdot \sigma_f\vec{e}_3 
    .\label{IC4*}
\end{align}
 The slip condition in \eqref{IC2*} says that the tangential stress is proportional to the {slip rate}, with slip-length $\beta>0$ (see \cite {mikelicBJS}, and references therein). The conservation of fluid mass across the interface is given by \eqref{IC1*}, the {kinematic coupling condition}. The balance of  stresses in \eqref{IC3*} is required by the conservation of momentum. The {dynamic coupling condition} \eqref{IC4*} maintains the balance of the stress normals across $\Gamma_I$.

\section{Energies, Spaces, and Weak Solutions}\label{section:energy_space_weak_solution}

\subsection{Functional Notation}

We generally work in the framework of $L^2(U)$, where $U \subseteq \mathbb R^n$ for $n=2,3$ is a spatial domain. We denote  $L^2(U)$ inner products by $(\cdot,\cdot)_U$.   Standard Sobolev spaces of the form $H^s(U)$ and $H^s_0(U)$ (along with their duals) will be defined in the typical way \cite{kesavan}, with the $H^s(U)$ norm denoted by $||\cdot||_{s,U}$, or just $||\cdot||_{s}$. For a Banach space $Y$ we denote its dual as $Y'$, and  the associated  pairing as $\langle \cdot, \cdot\rangle_{Y'\times Y}$. We denote $\vec{x} = (x_1,x_2,x_3) \in \mathbb{R}^3$, with associated spatial differentiation by $\partial_i$. {For estimates, we will use the notation $A\lesssim B$ to mean that there exists a constant $c$ (which may depend on $\Omega$ and $T$) for which $A \le c B$.}

\subsection{Energy Balance} \label{ebal}

In order to present a weak formulation, we consider a formal energy balance. We allow  inertial and damping parameters to be present, later allowing them to vanish. At the end of the section, we will present the energy and dissipator for the quasi-static and undamped dynamics of central interest. 

Formally testing the system \eqref{biot1**}--\eqref{stokes1} with $(\vec{u}_t, p_b, \vec{v})$, resp., zeroing out  sources, and assuming the solution is smooth, we invoke the relevant boundary and coupling conditions to obtain:
\begin{align*}
0=&~    \rho_b(\vec{u}_{tt},\vec{u}_t)_{\Omega_b} - (\text{div}~\sigma^E(\vec{u}+\delta \bu_t),\vec{u}_t)_{\Omega_b} + \alpha (\nabla p_b, \vec{u}_t)_{\Omega_b} + c_0((p_{b})_t,p_b)_{\Omega_b}  \\[.1cm]
    &+ \alpha (\nabla\cdot \vec{u}_t,p_b)_{\Omega_b}- (\nabla\cdot [k\nabla p_b],p_b)_{\Omega_b}+ \rho_f(\vec{v}_t,\vec{v})_{\Omega_f} -2\nu (\nabla\cdot \mathbf D(\bv),\vec{v})_{\Omega_f} + (\vec{v},\nabla p_f)_{\Omega_f}\\[.2cm]
    =&~ \frac{1}{2}\frac{d}{dt}\Big[\rho_b\|\vec{u}_t\|_{0,b}^2 + \|\vec{u}\|_E^2 + c_0\|p_b\|_{0,b}^2 + \rho_f\|\vec{v}\|_{0,f}^2\Big]  + k\|\nabla p_b\|_{0,b}^2 + 2\nu\|\vec{D}(\vec{v})\|_{0,f}^2  \\ 
    &+ \beta \|(\vec{v}-\vec{u}_t)\cdot\boldsymbol{\tau} \|_{\Gamma_I}^2+\delta ||\bu_t||_E^2,
\end{align*}
Above, we  invoked the  stress-strain  {inner product} \begin{equation}\label{korny} \|\vec{u}\|^2_E \equiv  (\sigma^E(\vec{u}),\vec{D}(\vec{u}))_{\mathbf L^2(\Omega)},\end{equation} which is equivalent to the standard $\mathbf H^1(\Omega_b)$ norm here \cite{show2000,AGW}. Then, we can define a total energy $e^{\rho}(t)$  and  a dissipator $d^{\delta}(t)$, denoting dependencies on inertial and damping parameters:  \begin{align} \label{energy} e^{\rho}(t) \equiv &~ \frac{1}{2}\Big[\rho_b\|\vec{u}_t\|_{0,b}^2 + \|\vec{u}\|_E^2 + c_0\|p_b\|_{0,b}^2 + \rho_f\|\vec{v}\|_{0,f}^2\Big],\\
\label{dissipator} d^{\delta}(t) \equiv&~ \int_0^t\big[\delta ||\bu_t||_E^2+k\|\nabla p_b\|_{0,b}^2 
+ 2\nu \|\vec{D}(\vec{v})\|_{0,f}^2 
+ \beta \|(\vec{v}-\vec{u}_t)\cdot\boldsymbol{\tau} \|_{\Gamma_I}^2 \big] d\tau. \end{align}
From this, we obtain the formal energy balance for all $\delta,\rho_b,\rho_f,c_0 > 0$:
\begin{equation} \label{eident} e^{\rho}(t) +d^{\delta}(t) = e^{\rho}(0).\end{equation}

This identity holds for sufficiently smooth solutions, and, can be extended to weak solutions for finite energy data. We  refer to the {\em energy inequality} as
\begin{align} \label{eidentt}
e^{\rho}(t) + d^{\delta}(t) \le & ~ e^{\rho}(0), \\ \label{eidenttt}
e^{\rho}(t) + d^{\delta}(t) \lesssim & ~ e^{\rho}(0) + \int_0^t\big[||\mathbf F_b(\tau)||^2_{\vec{L}^2(\Omega_b)}+||\mathbf F_f(\tau)||^2_{[\mathbf H^1_{\#,*}(\Omega_f)\cap \mathbf V]'}+||S(\tau)||_{[H^1_{\#,*}(\Omega_b)]'}^2 \big]d\tau,
\end{align}
in the first case when $\mathbf F_f = \mathbf F_b = S \equiv 0$, and in the latter case when sources are present; relevant function spaces are defined  in Section \ref{absModel}. 
The primary case of interest is when  $\delta=0$ and $\rho_b=\rho_f=0$, yielding $e^0(t) \equiv e(t)$ and $d^{0}(t) \equiv d(t)$: \begin{align} e(t) =  \frac{1}{2}\Big[  \|\vec{u}\|_E^2 + c_0\|p_b\|_{0,b}^2 \Big],~~ d(t) = \int_0^t\big[k\|\nabla p_b\|_{0,b}^2 
+ 2\nu \|\vec{D}(\vec{v})\|_{0,f}^2 
+ \beta \|(\vec{v}-\vec{u}_t)\cdot\boldsymbol{\tau} \|_{\Gamma_I}^2 \big] d\tau.\end{align}

\begin{remark}[Initial States] \label{initialstates} We note that the initial conditions appearing in the inertial energy $e^{\rho}(0)$ are of the form 
$$\bu(0), ~~~~\rho_b\bu_t(0),~~~~c_0p_b(0),~~~~\rho_f\bv(0),$$
and hence only $\bu(0)$ and $c_0p_b(0)$ survive the inertial limit as $\rho_f,\rho_b \searrow 0$. We remark that our theorems are stated below for initial conditions of the form 
$\bu(0)=\bu_0$ and for the fluid content $[c_0p_b+\alpha \nabla\cdot \bu](0)=d_0$; when $c_0>0$, the quantity $p_b(0)$ can be recovered from 
$$p_b(0) = c_0^{-1}d_0-\alpha c_0^{-1}\nabla \cdot \bu_0,$$ and when $c_0=0$, we have the implicit compatibility condition that $d_0 = \alpha \nabla \cdot \bu_0$. 
\end{remark}
\subsection{Spaces}\label{absModel}
For either domain $\Omega_i$ ($i=b,f$), we introduce the notation $H^s_\#(\Omega_i)$ for the space of  functions from $H^s(\Omega_i)$ that are periodic in directions $x_1$ and $x_2$ (with spatial period 1). Accompanying these spaces, we have trace spaces $H_{\#}^{r}(\partial \Omega_f)$ for $r=\pm 1/2, \pm 3/2$, as defined in detail in \cite{AGW}.
Now let
\begin{align*}
    \vec{U} &\equiv \{\vec{u} \in \vec{H}_\#^1(\Omega_b)~:~~ \vec{u}\big|_{\Gamma_b} = \vec{0}\},\\
    \vec{V} &\equiv \{\vec{v} \in \vec{L}^2(\Omega_f) ~:~~ \text{div}~ \vec{v} \equiv 0 \text{ in }\Omega_f; \, ~~ \vec{v}\cdot\vec{n} \equiv 0\text{ on } \Gamma_f\},\end{align*}
    and define the {\em state space} for the inertial filtration by
    \begin{align}
    X &\equiv  \vec{U} \times \vec{L}^2(\Omega_b) \times L^2(\Omega_b) \times \vec{V}.
\end{align}
We will also use the notation below, $\vec{H}^1_{\#,\ast}(\Omega_f)$ and ${H}^1_{\#,\ast}(\Omega_b)$, where the subscript $*$ denotes a zero Dirichlet trace on $\Gamma_f$ and $\Gamma_b$, resp., that is, on the $\{x_3=-1\}$ or $\{x_3=1\}$ faces.
We will {subsequently} topologize the space $H^1_{\#,*}(\Omega_b)$ with the standard gradient norm, via the Poincar\'e inequality \cite{kesavan,bgsw}: $$||.||_{H^1_{\#,*}(\Omega_b)} \equiv ||\nabla ~.||_{L^2(\Omega_b)}.$$
The topology of $\mathbf U$ is induced by  $||\cdot||_E$, introduced before in \eqref{korny} through the bilinear form \begin{equation}\label{bilform} a_E(\cdot,\cdot)=(\sigma^E(\cdot),\mathbf D(\cdot))_{\mathbf L^2(\Omega_b)}.\end{equation} As noted, the induced norm is equivalent to the $\mathbf H^1(\Omega_b)$ norm on $\mathbf U$. 

When $c_0,\rho_b,\rho_f>0$, we introduce equivalent topologies through a particular inner-product on the energy space $X$:
\begin{align}\label{innerX}
(\mathbf y_1, \mathbf y_2)_X = a_E(\bu_1,\bu_2)+\rho_b(\bw_1,\bw_2)_{\mathbf L^2(\Omega_b)}+c_0(p_1,p_2)_{L^2(\Omega_b)}+\rho_f(\mathbf v_1,\mathbf v_2)_{\mathbf L^2(\Omega_f)},
\end{align}
where the state variable $\vec{w}$ captures the Biot elastic velocity $\vec{u}_t$.

\subsection{Definition of Weak Solutions}
{We will now define weak solutions for all $\rho_b, \rho_f, c_0, \delta \ge 0$---the single definition holding across all cases.} Our function spaces will then depend on these parameter values, but we follow the convention of \cite{multilayered} and encode the dependencies into the spaces, allowing us to omit superscripts in our descriptions. 
To that end, define the spaces
\begin{align*}
    \mathcal{V}_b &= \{\vec{u} \in L^\infty(0,T;\vec{U}) ~:~ \rho_b\vec{u} \in W^{1,\infty}(0,T;\vec{L}^2(\Omega_b)),~\delta \bu_t \in L^2(0,T;\mathbf U)\}, \\
    \mathcal{Q}_b &= \left\{p \in L^2(0,T; H^1_{\#,*}(\Omega_b))~:~c_0p \in L^\infty(0,T;L^2(\Omega_b))\right\}, \\
    \mathcal{V}_f &= \left\{ \bv \in L^2(0,T; \vec{H}^1_{\#,\ast}(\Omega_f) \cap \vec{V})~:~\rho_{f} \bv \in L^\infty(0,T;\vec{V})\right\}.
\end{align*}
Then, the {\em weak solution space} is
\newcommand\Vsol{\mathcal{V}_{\text{sol}}}
\newcommand\Vtest{\mathcal{V}_{\text{test}}}    
\[
    \mathcal{V}_\text{sol} = \mathcal{V}_b\times \mathcal{Q}_b\times\mathcal{V}_f,~\text{ for } [\bu,p_b,\bv]^T
\]
and the {\em test space} is

\[
    \mathcal{V}_\text{test} = C^1_0\left([0,T); \vec{U} \times  H^1_{\#,*}(\Omega_b) \times (\vec{H}^1_{\#,\ast}(\Omega_f)\cap \vec{V})\right),~\text{ for } ~[\bxi,q_b,\bzeta]^T.
\]

To give the weak formulation, we use the time-space conventions:
 $((\cdot, \cdot))_{\mathscr O}$ denotes an inner-product on $L^2(0,T;L^2(\mathscr O))$, for a spatial domain $\mathscr O$. Similarly, for a Sobolev space defined on $\mathscr O$, say $W(\mathscr O)$, we will denote $(\langle \cdot,\cdot \rangle)_{\mathscr O}$  as pairing between $L^2(0,T;W(\mathscr O))$ and its dual $L^2(0,T;[W(\mathscr O)]')$.
 We will consider initial conditions of the form:
     \begin{equation}\label{weakICs}
        \vec{u}(0) = \vec{u}_0, ~~ \rho_{b}\vec{u}_t(0) = \rho_{b}\vec{u}_1, ~~ [c_0p_b + \alpha\nabla\cdot\vec{u}](0) = d_0, ~~ \rho_{f}\vec{v}(0) = \rho_{f}\vec{v}_0.
    \end{equation}
    Note that, again, the above convention allows us to treat the case of vanishing inertial parameters by reducing to tautologies in those cases. We note that an implicit compatibility condition on the data, as presented above, will apparently be required when $c_0=0$ such that $d_0= \alpha \nabla \cdot \bu_0$ in the appropriate sense---see Remark \ref{initialstates}. 

   Finally, we consider source data of regularity (at least): 
    $$\mathbf F_f \in L^2\left(0,T;(\vec{H}^1_{\#,\ast}(\Omega_f)\cap \vec{V})'\right),~~\mathbf F_b \in L^2\left(0,T;\vec{L}^2(\Omega_b)\right),~~\text{and}~~S \in L^2\left(0,T; (H^{1}_{\#,*}(\Omega_b))'\right).$$

\begin{definition}\label{weaksols}
    For any  $\rho_b,\rho_f,c_0,\delta \ge 0$, we  say that $[\vec{u},p,\vec{v}]^T \in \mathcal{V}_\text{sol}$ is a weak solution to the filtration: \\[.2cm] (1) 
    For every test function $[\bxi,q_b,\bzeta]^T \in \mathcal{V}_\text{test}$ the following identity holds:
    \begin{align}\label{weakform}
        -&~ \rho_b((\vec{u}_t,\bxi_t))_{\Omega_b} + ((\sigma_b(\vec{u},p), \nabla \bxi))_{\Omega_b}- ((c_0 p + \alpha\nabla\cdot\vec{u}, \partial_t q_b))_{\Omega_b} + ((k\nabla p,\nabla q_b))_{\Omega_b} \nn\\
        &- \rho_f((\vec{v},\bzeta_t))_{\Omega_f} + 2\nu((\vec{D}(\vec{v}),\vec{D}(\bzeta)))_{\Omega_f} + (({p,(\bzeta-\bxi)\cdot\vec{e}_3}))_{\Gamma_I} - (({\vec{v}\cdot\vec{e}_3,q_b}))_{\Gamma_I} \nn\\
        &- (({\vec{u}\cdot\vec{e}_3,\partial_tq_b}))_{\Gamma_I} + \beta (({\vec{v}\cdot\btau,(\bzeta - \bxi)\cdot\btau}))_{\Gamma_I} + \beta(({\vec{u}\cdot\btau, (\bzeta_t - \bxi_t)\cdot\btau}))_{\Gamma_I} \nn\\
        =&~ \rho_b(\mathbf u_1,\bxi)_{\Omega_b}\big|_{t=0} + (d_0,q_b)_{\Omega_b}\big|_{t=0} + \rho_f(\vec{v}_0,\bzeta)_{\Omega_f}\big|_{t=0} + ({\vec{u}_0\cdot\vec{e}_3,q_b})_{\Gamma_I}\big|_{t=0} \nn\\
        &~- \beta({\vec{u}_0\cdot\btau, (\bzeta - \bxi)\cdot\btau})_{\Gamma_I}\big|_{t=0} + ((\vec{F}_b,\bxi))_{\Omega_b} + (\langle S,q_b\rangle)_{\Omega_b} + (\langle \vec{F}_f,\bzeta\rangle)_{\Omega_f},
    \end{align}
 for $\btau=\mathbf e_i,~i=1,2$.
    \\[.2cm] (2) The function $\mathbf u \in \mathcal V_b$ has the additional property that $\gamma_0[\mathbf u]_t \cdot \btau \in L^2(0,T;L^2(\Gamma_I))$ for $\btau=\mathbf e_1,\mathbf e_2$.
\end{definition}
\begin{remark} As in previous works such as \cite{multilayered,AGW,AW} the weak formulation carries the initial conditions on the right hand side. Thus we do not include the initial conditions explicitly as separate criteria in the definition of weak solutions. This is a subtle but important point, since the time regularity of solutions is at issue in degenerate models. Later works will investigate time continuity of Biot displacements, along the lines of \cite{bmw}.  \end{remark}

 The approach to constructing solutions in this paper is based on considering an underlying  $C_0$-semigroup \cite{pazy} for the non-degenerate and damped dynamics---$\rho_f,\rho_b,c_0,\delta>0$. The main result for this case will be summarized in Section \ref{subsection:main_result}. A detailed discussion of the semigroup and its generator will be given in the Appendix. We note here that the semigroup provides strong solutions for appropriately smooth data. 

\section{Main Results and Review of Supporting Literature} \label{section:main_result}

In this section, we present our main existence result, Theorem \ref{th:main1}, for quasi-static weak solutions, followed by some auxiliary results; the latter results serve as the foundation for the former.

\subsection{Main Result and Discussion}\label{subsection:main_result}
Our main result, Theorem \ref{th:main1}, covers the existence of weak solutions in the sense of Definition \ref{weaksols} with $\delta=\rho_f=\rho_b=0$.

 \begin{theorem}\label{th:main1} Suppose $\mathbf F_f \in L^2(0,T;[\vec{H}^1_{\#,\ast}(\Omega_f)\cap \vec{V}]')$, $\mathbf F_b \in L^2(0,T;\mathbf L^2(\Omega_b))$, and $S \in L^2(0,T; [H^{1}_{\#,*}(\Omega_b)]')$.
   Then for $\rho_b=\rho_f=\delta =0$ and any $c_0\ge 0$, the dynamics admit a weak solution corresponding to the initial conditions
\begin{align} 
    \bu(0)=\bu_0 \in \vec{U},~~ [c_0p+\alpha \nabla \cdot \bu](0)=d_0 \in L^2(\Omega_b).
\end{align}
(When $c_0=0$, we require $\alpha\nabla \cdot \bu_0=d_0 \in L^2(\Omega_b).$)

   This weak solution satisfies the energy inequality in \eqref{eidenttt}. 
\end{theorem}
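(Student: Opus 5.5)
We construct the solution through a chain of three limits, starting from the non-degenerate, damped, inertial regime $\rho_b,\rho_f,c_0,\delta>0$. There the semigroup of the Appendix on $X$ gives strong solutions for smooth data and smooth sources, and mild solutions for finite-energy data; by density these are weak solutions in the sense of Definition \ref{weaksols} and satisfy the energy inequality \eqref{eidenttt}. The initial data are handled as follows. We approximate $\bu_0$, $d_0$ and the sources by smooth, compatible data. We take $\bu_1=0$ and $\bv_0=0$, which is harmless since only $\rho_b\bu_1$ and $\rho_f\bv_0$ enter \eqref{weakform}. We set $p_b(0)=c_0^{-1}(d_0-\alpha\nabla\cdot\bu_0)$, as in Remark \ref{initialstates}. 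With these choices, $e^\rho(0)$ stays bounded uniformly in $\rho_b,\rho_f,\delta$ for fixed $c_0>0$.

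\emph{Step 1: $\rho_b,\rho_f\searrow0$ with $\delta,c_0>0$ fixed.}
Estimate \eqref{eidenttt} gives uniform bounds on the following quantities:
\begin{itemize}
\item $\bu$ in $L^\infty(0,T;\mathbf U)$ and $\sqrt{\delta}\,\bu_t$ in $L^2(0,T;\mathbf U)$;
\item $\sqrt{c_0}\,p_b$ in $L^\infty(0,T;L^2)$ and $p_b$ in $L^2(0,T;H^1_{\#,*}(\Omega_b))$;
\item $\bv$ in $L^2(0,T;\mathbf H^1_{\#,*}\cap\mathbf V)$, using Korn's inequality on $\Omega_f$;
\item $\sqrt{\rho_b}\,\bu_t$ and $\sqrt{\rho_f}\,\bv$ in $L^\infty L^2$.
\end{itemize}
When the sources are present, the forcing terms are absorbed with Young's inequality. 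The viscoelastic dissipation is essential here: it controls $\bu_t$ independently of $\rho_b$. As a result, the tangential trace $\gamma_0[\bu_t]\cdot\btau$ is bounded in $L^2L^2(\Gamma_I)$, which is property (2) of the definition.

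We extract weak-$*$ and weak convergent subsequences. Every term in \eqref{weakform} is linear. The inertial terms satisfy $\rho_b((\bu_t,\bxi_t))=\sqrt{\rho_b}\cdot O(1)\to0$, and similarly for $\rho_f$, along with their initial-data terms. The interface terms pass to the limit by continuity of traces $H^1\to L^2(\Gamma_I)$ under weak convergence. We therefore obtain a weak solution with $\rho_b=\rho_f=0$ and $\delta,c_0>0$. The energy inequality \eqref{eidenttt} survives by weak lower semicontinuity of the norms, with the right-hand side independent of $\rho$.

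\emph{Step 2: $\delta\searrow0$ with $c_0>0$.}
Now \eqref{eidenttt} for the quasi-static viscoelastic solutions gives bounds independent of $\delta$, except for $\bu_t$. The term $\delta((\sigma^E(\bu_t),\nabla\bxi))$ equals $-\delta((\sigma^E(\bu),\nabla\bxi_t))$ plus an initial-data term (integrating by parts in time), and both are $O(\delta)$. The main obstacle is property (2), together with the limit of the slip term $\beta\|(\bv-\bu_t)\cdot\btau\|^2_{\Gamma_I}$. Once the $\delta$-dissipation is lost, the only control on $\bu_t\cdot\btau$ on $\Gamma_I$ comes from the dissipator. The bound on $\int_0^t\beta\|(\bv-\bu_t)\cdot\btau\|^2$ combined with the bound on $\bv$ in $L^2H^1$ (hence on its trace) gives $\gamma_0[\bu]_t\cdot\btau$ bounded in $L^2L^2(\Gamma_I)$ uniformly in $\delta$. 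Here we must use that this trace is a distributional time derivative: it lies in $H^1(0,T;L^2(\Gamma_I))$, which is weakly closed, so the bound passes to the limit. This is exactly why the weak form carries $\bu\cdot\btau$ tested against $\bxi_t,\bzeta_t$. The remaining terms converge as in Step 1, and lower semicontinuity again preserves the energy inequality.

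\emph{Step 3: $c_0\searrow0$.}
Here we use the compatible data $d_0=\alpha\nabla\cdot\bu_0$, so $p_b(0)=0$ and $e(0)=\frac12\|\bu_0\|_E^2$ is independent of $c_0$. The bounds on $\bu$ in $L^\infty\mathbf U$, on $p_b$ in $L^2H^1$, on $\bv$, and on the tangential trace all remain uniform. The term $c_0((p,\partial_tq_b))$ is $O(\sqrt{c_0})$ because $\sqrt{c_0}\,p$ is bounded in $L^\infty L^2$, so it vanishes. Passing to the limit in \eqref{weakform} and in the energy inequality then completes the proof for all $c_0\ge0$.
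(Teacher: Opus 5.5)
Your three-stage limit ($\vec\rho\to0$ with $\delta,c_0>0$; then $\delta\to0$ after integrating the viscoelastic term by parts in time; then $c_0\to0$) is the paper's route. Your treatment of the tangential trace, via a uniform bound in $H^1(0,T;L^2(\Gamma_I))$ and weak closedness, is a tidier version of the paper's identification of $\overline{\bu}_t\cdot\btau$. The gap is in Step~2, where you assert that \eqref{eidenttt} gives bounds independent of $\delta$. The sources $\mathbf F_f$ and $S$ are harmless, since they are absorbed by $2\nu\|\mathbf D(\bv)\|_{0,f}^2$ and $k\|\nabla p_b\|_{0,b}^2$ uniformly in every parameter. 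The work term $\int_0^t(\mathbf F_b,\bu_t)_{\Omega_b}\,d\tau$ is not. In Step~1 you absorb it by Young's inequality against $\delta\int_0^t\|\bu_t\|_E^2\,d\tau$, which costs $\delta^{-1}\|\mathbf F_b\|^2_{L^2(0,T;\mathbf U')}$; the alternative, Gronwall against $\rho_b\|\bu_t\|_{0,b}^2$, degenerates as $\rho_b\to0$. Once $\rho_b=\delta=0$, nothing in $e(t)+d(t)$ controls $\bu_t$ in the interior of $\Omega_b$, because the slip term sees only $\bu_t\cdot\btau$ on $\Gamma_I$. So your Step~1 bound is uniform in $\vec\rho$ but blows up as $\delta\to0$, and the real obstacle in Step~2 is this term, not property (2). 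The paper's proof asserts the same $\delta$-independence without justification, so here you do not diverge from the paper; you inherit its gap.

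The gap cannot be closed under the stated hypothesis $\mathbf F_b\in L^2(0,T;\mathbf L^2(\Omega_b))$. Take $\mathbf F_b(t,\mathbf x)=f(t)\boldsymbol\phi(\mathbf x)$ with $0\neq\boldsymbol\phi\in C_c^\infty(\Omega_b)$, $\nabla\cdot\boldsymbol\phi=0$, and $f\in L^2(0,T)\setminus L^\infty(0,T)$. Test \eqref{weakform} (with $\rho_b=\delta=0$) against $\bxi=\chi(t)\boldsymbol\phi$, $\chi\in C_c^\infty(0,T)$, $q_b=0$, $\bzeta=\mathbf 0$. All interface and initial terms vanish, as does $\alpha((p,\nabla\cdot\bxi))_{\Omega_b}$, leaving
\[
f(t)\,\|\boldsymbol\phi\|_{0,b}^2=(\sigma^E(\bu(t)),\nabla\boldsymbol\phi)_{\Omega_b}\quad\text{for a.e. } t\in(0,T).
\]
The right side lies in $L^\infty(0,T)$ for every $\bu\in L^\infty(0,T;\mathbf U)$, but the left side does not. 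Hence no weak solution in $\mathcal V_{\text{sol}}$ exists for this data, for any $c_0\ge0$, and no $\delta$-uniform bound on $\bu^\delta$ in $L^\infty(0,T;\mathbf U)$ is possible. What is missing is time regularity of the elastic load. If, say, $\mathbf F_b\in H^1(0,T;\mathbf U')$ (or $\mathbf F_b\equiv\mathbf 0$), integrate by parts in time on the smooth approximants,
\[
\int_0^t(\mathbf F_b,\bu_t)_{\Omega_b}\,d\tau=\langle\mathbf F_b(t),\bu(t)\rangle_{\mathbf U'\times\mathbf U}-\langle\mathbf F_b(0),\bu_0\rangle_{\mathbf U'\times\mathbf U}-\int_0^t\langle\partial_t\mathbf F_b,\bu\rangle_{\mathbf U'\times\mathbf U}\,d\tau,
\]
and close with Young's inequality and Gronwall in $\|\bu\|_E^2$. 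This gives an energy bound independent of $\rho_b,\rho_f,\delta,c_0$, with $\|\mathbf F_b\|_{H^1(0,T;\mathbf U')}$ in place of $\|\mathbf F_b\|_{L^2(0,T;\mathbf L^2(\Omega_b))}$ on the right of \eqref{eidenttt}. With that bound, your Steps~1--3 go through as written, including the choice $d_0=\alpha\nabla\cdot\bu_0$ in Step~3.
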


The strategy for proving our main theorem is as follows. We begin by invoking the well-posedness of the system \eqref{biot1**}--\eqref{IC4*} for $c_0$, $\rho_b,\rho_f,$ and $\delta>0$. This result follows from the existence of an underlying $C_0$-semigroup \cite{pazy} associated with the Biot-Stokes dynamics, whose existence can be established in a manner similar to that found in \cite{AGW}---see Section \ref{subsection:Ancillary_Result} below. A detailed discussion of the semigroup and its generator is provided in the Appendix. Next, we send these parameters to zero in the following order: the inertial parameters $\rho_b$ and $\rho_f$ are sent to zero with $\delta>0$ fixed (Section \ref{subsection:vanished_inertia}), and then $\delta$ is sent to zero in a latter step (Section \ref{subsection:vanishing_damping}). The proof below is written with $c_0>0$; the storage-degenerate case $c_0=0$ is not shown here, since it is achieved by the same finite-energy limiting mechanism used in \cite{AGW,bw} and is unaffected by the treatment of inertia/viscoelasticity here.

We now frame the preceding theorem in relation to the closest existing results. Showalter's filtration work \cite{showfiltration} is an important antecedent of the present work: it couples poroelasticity to Stokes flow and treats the problem by implicit, degenerate methods \cite{indiana,show2000}. That work, however, is formulated for a compressible free flow and inertial poroelasticity; its proof does not carry  limiting constructions for   quasi-static Biot-Stokes dynamics with free-flow incompressibility. Cesmelioglu's analysis of the coupled Navier-Stokes-Biot problem \cite{filtration2} is relevant; there, the fluid-poroelastic system is inertial and the fluid nonlinear. The result gives existence and uniqueness under hypotheses appropriate to the Navier-Stokes-Biot coupling, with stronger data requirements than that of finite energy. 

On the numerical-analysis side, works by Yotov et al. (such as \cite{yotovLagrange, yotov2022,yotovMultipoint,yotovNSB}) develop  Lagrange-multiplier, mixed, multipoint stress-flux, and augmented  mixed formulations for Stokes-Biot or Navier-Stokes-Biot systems. Much of this work develops continuous formulations  in concert with semidiscrete or fully discrete error analysis. While these works are close to this analysis,  their focus is on robust discretization, local conservation, and mixed-variable formulations. In particular, continuous weak theories are  posed in non-degenerate storage regimes. Moreover, in these formulations, weak solutions require data with  compatibility or additional regularity conditions tailored to the  formulation; we contrast with the result here, where weak solutions are constructed from finite-energy data---in the sense of Definition~\ref{weaksols}---utilizing the regularity needed for the natural energy estimates and including the   $c_0=0$ case. 

The issue of uniqueness of energy-level weak solutions is closely tied to regularity in this class of problems: the inertial analyses \cite{AGW,AW} show the role played by interface and time regularity in the full 3D-3D coupling, while even quasi-static Biot dynamics alone exhibit subtle time-regularity effects \cite{bw,bmw,bmw2}. For this reason, the present paper focuses on the construction of quasi-static weak solutions in this previously open regime, leaving refined regularity and uniqueness questions to subsequent work. Thus the point of the present construction is not to characterize all possible weak solutions, but to obtain finite-energy weak solutions in a singular quasi-static regime where direct energy-level constructions are unavailable.

\subsection{Ancillary Results: Semigroup Solutions} \label{subsection:Ancillary_Result}

For the non-degenerate Biot system, where $\rho_b$, $\rho_f$, $\delta$, and $c_0$ are positive, the dynamics 
\eqref{biot1**}--\eqref{IC4*} can be represented by an operator 
$\cA_{\delta}$ acting on the state space
\[
X \equiv \vec{U} \times \vec{L}^2(\Omega_b) \times L^2(\Omega_b) \times \vec{V}.
\]
To describe this operator, we first introduce the principal differential 
operators associated with the Biot component:
\[
\cE_0 = -\rho_b^{-1}\nabla\cdot\sigma^E,
\qquad
A_0 = -c_0^{-1}k\Delta.
\]
With this notation, the dynamics operator is given by
\begin{equation}\label{opfirst}
    \cA_{\delta} \equiv 
    \begin{pmatrix}
        0 & \vec{I} & 0 & 0\\
        -\cE_0 & -\delta \cE_{0} & -\alpha\rho_b^{-1} \nabla & 0\\
        0 & -\alpha c_0^{-1} \nabla\cdot & -A_0 & 0\\
        0 & 0 & \rho_f^{-1}G_1 & 
        \rho_f^{-1}[\nu\Delta + G_2 + G_3]
    \end{pmatrix}.
\end{equation}The operators $G_1, G_2,$ and $G_3$ above are certain Green's maps which enable the elimination of the pressure (and carry boundary information) \cite{avalos*,AGW,AW}. The operator $\cA_{\delta}$ will be discussed in detail in the Appendix. The result below is obtained identically as in \cite{AGW} for the inertial $\rho_b, \rho_f>0$ but undamped filtration $\delta=0$, simply accounting for the addition of strong damping in the hyperbolic component of the dynamics.

\begin{theorem}
\label{th}
    The operator $\mathcal A_{\delta}: \mathcal D(\mathcal A_{\delta})\subset X \to X$ given in Definition \ref{diffdomain} is the generator of a strongly continuous semigroup $\{e^{\cA_{\delta} t}: t\geq 0\}$ of contractions on $X$. Thus, for $\mathbf y_0 \in \mathcal D(\mathcal A_{\delta})$, we have $e^{\mathcal A_{\delta}\cdot}\mathbf y_0 \in C([0,T];\mathcal D(\mathcal A_{\delta}))\cap C^1((0,T);X)$ satisfying \eqref{cauchy} in the {\em strong sense} with $\mathcal F=[\mathbf 0,\mathbf 0, 0,\mathbf 0]^T$. 
    
    Similarly, for $\mathbf y_0 \in X$, we have $e^{\mathcal A_{\delta}\cdot}\mathbf y_0 \in C([0,T];X)$ satisfying \eqref{cauchy} in the {\em generalized} or semigroup sense  with $\mathcal F=[\mathbf 0, \mathbf 0, 0,\mathbf 0]^T$.
\end{theorem}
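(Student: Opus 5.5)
The plan is to apply the Lumer--Phillips theorem \cite{pazy} on the Hilbert space $X$ with inner product \eqref{innerX}, following the template of \cite{AGW} for the undamped case $\delta=0$. We must show two things: that $\cA_\delta$ is dissipative, i.e.\ $(\cA_\delta \mathbf y,\mathbf y)_X\le 0$ for all $\mathbf y\in\mathcal D(\cA_\delta)$, and that it is maximal, i.e.\ $\lambda I-\cA_\delta$ is surjective onto $X$ for some $\lambda>0$. Densely defined is then automatic for a maximal dissipative operator on a Hilbert space. The statements about strong solutions for $\mathbf y_0\in\mathcal D(\cA_\delta)$ and generalized solutions for $\mathbf y_0\in X$ are standard consequences of $C_0$-semigroup theory once generation is in hand.

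For dissipativity, write $\mathbf y=[\bu,\bw,p,\bv]^T\in\mathcal D(\cA_\delta)$ and compute $(\cA_\delta\mathbf y,\mathbf y)_X$. Integrating by parts with the boundary data built into Definition \ref{diffdomain} (the interface conditions \eqref{IC1*}--\eqref{IC4*} with $\bw$ in place of $\bu_t$, periodicity, and the conditions \eqref{BC1}), this reproduces the formal computation of Section \ref{ebal}. The term $a_E(\bw,\bu)$ cancels against $-\rho_b(\cE_0\bu,\bw)$. The coupling terms $\alpha(\nabla p,\bw)$ and $\alpha(\nabla\cdot\bw,p)$ cancel up to interface terms. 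The Green's maps $G_1,G_2,G_3$ then supply the remaining interface terms, which combine as in \cite{AGW}. The result is
\[
\mathrm{Re}(\cA_\delta\mathbf y,\mathbf y)_X=-\delta\|\bw\|_E^2-k\|\nabla p\|^2_{0,b}-2\nu\|\mathbf D(\bv)\|^2_{0,f}-\beta\|(\bv-\bw)\cdot\btau\|^2_{\Gamma_I}\le 0 .
\]
The only change from \cite{AGW} is the new damping term, which has a favorable sign.

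For maximality, fix $\lambda>0$ and data $\mathbf f=[\mathbf f_1,\mathbf f_2,f_3,\mathbf f_4]\in X$, and solve $(\lambda-\cA_\delta)\mathbf y=\mathbf f$. First eliminate $\bu=\lambda^{-1}(\bw+\mathbf f_1)$. This leaves a stationary system in $(\bw,p,\bv)$ in which the elastic operator acts on $\lambda^{-1}\bw+\delta\bw$, a strictly positive combination. Next, form the corresponding bilinear form on $\mathbf U\times H^1_{\#,*}(\Omega_b)\times(\mathbf H^1_{\#,*}(\Omega_f)\cap\mathbf V)$. To handle the off-diagonal pressure coupling, use the same scaling as in \cite{AGW}: test the $p$-equation against $q$, and the $\bw$-equation against $\bxi$, with the interface pressure term written as $\langle p,(\bzeta-\bxi)\cdot\mathbf e_3\rangle_{\Gamma_I}$. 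With this choice the coupling terms, including $\alpha(\nabla p,\bxi)$, $\alpha(\nabla\cdot\bw,q)$ and the interface terms, are skew and drop from the coercivity estimate. The diagonal part is bounded below by
\[
(\lambda^{-1}+\delta)\|\bw\|_E^2+\lambda c_0\|p\|^2+k\|\nabla p\|^2+\lambda\rho_f\|\bv\|^2+2\nu\|\mathbf D(\bv)\|^2 .
\]
Korn's inequality on $\mathbf U$ and a Korn-type inequality for $\bv$ (using the Dirichlet condition on $\Gamma_f$ and periodicity) give coercivity, and Lax--Milgram yields a weak solution. Then recover the fluid pressure $p_f$ by de Rham / inf-sup on $\Omega_f$ (cf.\ \cite{infsup1}). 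Finally, verify that the resulting $\mathbf y$ satisfies the regularity and interface conditions defining $\mathcal D(\cA_\delta)$, which is equivalent to $\cA_\delta\mathbf y\in X$ being read off from the equations.

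The main obstacle is this last step: matching the variational solution to the precise domain in Definition \ref{diffdomain}. The damping changes which quantity must lie in the range of the elasticity operator. The domain is now phrased via $\mathrm{div}\,\sigma^E(\bu+\delta\bw)\in\mathbf L^2$ rather than conditions on $\bu$ alone, and the interface traces $\sigma_b\mathbf e_3$ must be interpreted in $\mathbf H^{-1/2}_\#(\Gamma_I)$ for $\bu+\delta\bw$. I would handle this by copying the \cite{AGW} argument with $\bu$ replaced by $\bu+\delta\bw$ throughout, and checking that the Green's map construction of $G_1,G_2,G_3$ is unaffected, since it only involves the fluid side and the interface pressure.
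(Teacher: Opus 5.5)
Your plan follows the paper's route: the paper gives no separate argument and simply defers to the generation proof of \cite{AGW} ``mutatis mutandis.'' Your dissipativity identity and your coercive resolvent form supply exactly what that adaptation requires, with the Kelvin--Voigt term contributing $-\delta\|\bw\|_E^2$ to the first and $(\lambda^{-1}+\delta)a_E(\bw,\bw)$ to the second. The one thing to make explicit is that the domain you actually construct, with only $\cE_0(\bu+\delta\bw)\in\mathbf L^2(\Omega_b)$, is the correct one, and you should state it as your own definition rather than attribute it to Definition~\ref{diffdomain}. That definition's first bullet literally asks for $\cE_0\bu$ and $\cE_0\bw$ in $\mathbf L^2(\Omega_b)$ separately, and on that smaller domain $\lambda-\cA_\delta$ cannot be onto: the first resolvent equation $\lambda\bu-\bw=\mathbf f_1$ would force $\cE_0\mathbf f_1\in\mathbf L^2(\Omega_b)$ for an arbitrary $\mathbf f_1\in\mathbf U$.
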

As a corollary, we will obtain strong and weak solutions to \eqref{biot1**}--\eqref{IC4*}. By strong solutions to  \eqref{biot1**}--\eqref{IC4*} we mean weak solutions, as in Definition \ref{weaksols}, with enough regularity that the equations in \eqref{biot1**}--\eqref{IC4*} also hold pointwise. For the results presented below, we do not explicitly mention generalized solutions, as these are tied specifically to the semigroup framework (though the well-posedness of generalized solutions also follows from Theorem \ref{th}).

\begin{corollary}\label{coro:regular_strong_weak_existence}
\hfill
\begin{enumerate}
 \item[(1)] Take $\mathbf F_f\equiv \mathbf 0,$ $\mathbf F_b\equiv \mathbf 0$, and $S\equiv 0$. 

a) Suppose $\mathbf y_0 \in \mathcal D(\mathcal A_{\delta})$. Then strong solutions to \eqref{biot1**}--\eqref{IC4*}  exist and are unique. For such strong solutions, the energy relation \eqref{eident} holds.

b) Suppose $\mathbf y_0 \in X$. Then weak solutions as defined in Definition \ref{weaksols} exist. For such weak solutions, the energy inequality \eqref{eidentt} holds.

 \item[(2)] Suppose $\mathbf y_0 \in \mathcal D(\mathcal A_{\delta})$ and $\mathcal F = [\mathbf 0,\mathbf  F_b, S, \mathbf F_f]^T \in H^1(0,T;X)$. Then strong solutions to \eqref{cauchy} exist and are unique; the constructed solutions satisfy the energy inequality \eqref{eidenttt}.
 
\item[(3)] Suppose $\mathbf y_0 \in X$ and $\mathbf F_f \in L^2\big(0,T;(\vec{H}^1_{\#,\ast}(\Omega_f)\cap \vec{V})\big)'$, $\mathbf F_b \in L^2(0,T;\vec{L}^2(\Omega_b))$, and $S \in L^2(0,T; [H^{1}_{\#,*}(\Omega_b)]')$. Then weak solutions to \eqref{cauchy} exist; the constructed solutions satisfy the energy inequality \eqref{eidenttt}.
\end{enumerate}
\end{corollary}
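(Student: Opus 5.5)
The corollary follows from Theorem \ref{th} by standard semigroup theory \cite{pazy}, together with identification of semigroup solutions with weak solutions in the sense of Definition \ref{weaksols}. I will treat the four items in order, with (1a) as the base case.

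\textbf{Item (1a).} For $\mathbf y_0\in\mathcal D(\mathcal A_\delta)$, Theorem \ref{th} gives $\mathbf y(t)=e^{\mathcal A_\delta t}\mathbf y_0=[\bu,\bw,p_b,\bv]^T\in C([0,T];\mathcal D(\mathcal A_\delta))\cap C^1((0,T);X)$.
\begin{itemize}
\item The first row of \eqref{opfirst} gives $\bw=\bu_t$.
\item Membership in $\mathcal D(\mathcal A_\delta)$ (Definition \ref{diffdomain}) encodes the boundary and interface conditions \eqref{BC1}, \eqref{IC1*}--\eqref{IC4*}.
\item The pressure $p_f$ is recovered from the Green's maps $G_1,G_2,G_3$, exactly as in \cite{AGW}.
\end{itemize}
Hence the PDE system \eqref{biot1**}--\eqref{stokes1} holds pointwise in time in $X$. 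Uniqueness is inherited from uniqueness of the semigroup solution. For the energy identity \eqref{eident}, I will take the $X$-inner product \eqref{innerX} of $\mathbf y_t=\mathcal A_\delta\mathbf y$ with $\mathbf y$. This is justified since $\mathbf y\in\mathcal D(\mathcal A_\delta)$, and it gives $\tfrac{d}{dt}\tfrac12\|\mathbf y\|_X^2=(\mathcal A_\delta\mathbf y,\mathbf y)_X$. The dissipativity computation from the proof of Theorem \ref{th} (the Appendix) shows that $(\mathcal A_\delta \mathbf y,\mathbf y)_X$ equals exactly minus the integrand of $d^\delta$. Integrating in time yields \eqref{eident}.

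\textbf{Item (1b).} Next I will verify that strong solutions satisfy \eqref{weakform}. I test the equations with $[\bxi,q_b,\bzeta]\in\mathcal V_{\text{test}}$, integrate by parts in space using the interface conditions, and integrate by parts in time; the terms at $t=0$ produce the initial-data terms, and test functions vanish near $T$. One care point is the boundary terms $((\bu\cdot\vec e_3,\partial_t q_b))_{\Gamma_I}$ and $\beta((\bu\cdot\btau,(\bzeta_t-\bxi_t)\cdot\btau))_{\Gamma_I}$, which arise because the time derivative is moved off $\bu_t$ on $\Gamma_I$. For general $\mathbf y_0\in X$, I will approximate by $\mathbf y_0^n\in\mathcal D(\mathcal A_\delta)$, which is dense. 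By contraction, $\mathbf y^n\to\mathbf y$ in $C([0,T];X)$. The identity \eqref{eident}, applied to differences $\mathbf y^n-\mathbf y^m$, shows $\{\mathbf y^n\}$ is Cauchy in the dissipation norms as well:
\begin{itemize}
\item $\delta\bu^n_t$ in $L^2(0,T;\mathbf U)$;
\item $p^n$ in $L^2(0,T;H^1_{\#,*})$;
\item $\bv^n$ in $L^2(0,T;\mathbf H^1)$, via Korn;
\item $(\bv^n-\bu^n_t)\cdot\btau$ in $L^2(L^2(\Gamma_I))$.
\end{itemize}
Since $\delta>0$, the trace of $\bu_t$ lies in $L^2(0,T;L^2(\Gamma_I))$, which gives property (2) of Definition \ref{weaksols}. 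All terms of \eqref{weakform} are linear and continuous in these topologies, so the identity passes to the limit. The energy inequality \eqref{eidentt} follows by weak lower semicontinuity; strong convergence in fact gives equality.

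\textbf{Items (2) and (3).} With forcing, I will use the variation of parameters formula $\mathbf y(t)=e^{\mathcal A_\delta t}\mathbf y_0+\int_0^te^{\mathcal A_\delta(t-s)}\mathcal F(s)\,ds$. For $\mathcal F\in H^1(0,T;X)$ and $\mathbf y_0\in\mathcal D(\mathcal A_\delta)$ this is a strong solution \cite[Ch.~4]{pazy}. Testing against $\mathbf y$ now gives the energy identity plus source terms. These I will bound with Cauchy--Schwarz and Young:
\begin{itemize}
\item $(\mathbf F_b,\bu_t)$ is controlled through $\delta\|\bu_t\|_E^2$, using Poincar\'e/Korn;
\item $\langle S,p_b\rangle$ is absorbed by $k\|\nabla p_b\|^2$;
\item $\langle \mathbf F_f,\bv\rangle$ is absorbed by $\nu\|\mathbf D(\bv)\|^2$.
\end{itemize}
Grönwall then yields \eqref{eidenttt}.

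Item (3) follows by a double density argument. I approximate the data by smooth $\mathbf y_0^n$ and $\mathcal F^n\in H^1(0,T;X)$, converging to the given data in the dual spaces. The uniform estimate \eqref{eidenttt}, applied to differences, gives Cauchy sequences in $\mathcal V_{\text{sol}}$, and I pass to the limit in \eqref{weakform} as before.

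\textbf{Main obstacle.} The hard step is the rigorous identification in (1a)/(1b). One must check that the abstract domain $\mathcal D(\mathcal A_\delta)$ carries enough regularity for the interface traces, so that the integration by parts producing the $\Gamma_I$ terms of \eqref{weakform} is legitimate. One must also check that the Green's-map pressure elimination reconstructs $p_f$ consistently with \eqref{IC3*}--\eqref{IC4*}. I plan to follow \cite{AGW} verbatim, with $\sigma^E(\bu)$ replaced by $\sigma^E(\bu+\delta\bu_t)$ throughout.

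A second point needs care in (3). If the $L^2(0,T;\mathbf L^2(\Omega_b))$ forcing $\mathbf F_b$ is paired with $\bu_t$, the estimate is not uniform in $\delta$. The $\delta$-independent route is to rewrite $\int(\mathbf F_b,\bu_t)=(\mathbf F_b,\bu)\big|_0^t-\int(\partial_t\mathbf F_b,\bu)$, which requires a time derivative of $\mathbf F_b$. Since the corollary is stated for fixed $\delta>0$, I will use the $\delta$-dependent bound here, with constants allowed to depend on $\delta$.
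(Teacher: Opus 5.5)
Your proposal is correct and follows the paper's route. The paper justifies the strong-solution items (1a) and (2) exactly as you do, from the semigroup of Theorem \ref{th} and the domain in Definition \ref{diffdomain}, and it leaves the weak-solution items (1b) and (3) to the standard density argument of \cite{AGW}, which you carry out via the energy estimate applied to differences.

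You absorb $(\mathbf F_b,\bu_t)$ into $\delta\|\bu_t\|_E^2$, and no Gr\"onwall step is actually needed. This gives a constant depending on $\delta$ but not on $\rho_b,\rho_f,c_0$, which is precisely the uniformity used in the $\rho_b,\rho_f\to 0$ step. Your caveat is also well placed: this argument does not supply the $\delta$-independent bound that the paper later invokes in proving Theorem \ref{th2}.
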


The statements about {\em strong solutions} in Corollary \ref{coro:regular_strong_weak_existence}---(1a) and (2)---follow immediately from the standard theory of semigroups. Namely, when $\mathbf y_0 \in \mathcal D(\cA_{\delta})$ (and $\mathcal F=[\mathbf 0, \mathbf F_b,S,\mathbf F_f]^T \in H^1(0,T;X)$ for the inhomogeneous problem) we observe that the function $\mathbf y(t) =e^{\mathcal A_{\delta} t}\mathbf y_0$ is the unique solution (pointwise in time) to the Cauchy problem
$\dot{\mathbf y} = \mathcal A_{\delta}\mathbf y+\mathcal F$~ with $\mathbf y(0)=\mathbf y_0$; in addition, $e^{\mathcal A_{\delta} t}\mathbf y_0 \in C^1((0,T);X)\cap C([0,T];\mathcal D(\cA_{\delta}))$. From the definition of  $\mathcal D(\cA_{\delta})$, the equations in \eqref{biot1**}--\eqref{stokes1} hold pointwise in time, $a.e.$ in $\mathbf x$.  Moreover, the boundary conditions in \eqref{BC1} and \eqref{IC1*}--\eqref{IC4*} hold pointwise in time, in the sense of the definition $\mathcal D(\cA_{\delta})$ in Definition \ref{diffdomain}.

In Section \ref{section:proof_of_main_theorem}, we prove the existence of weak solutions for \eqref{biot1**}--\eqref{IC4*} for the case of zero inertia parameters $\rho_b$, $\rho_f$ and zero damping coefficient $\delta$. The storage-degenerate case $c_0=0$ is handled as above through the finite-energy limiting argument from \cite{AGW,AW}. \\

\section{Proof of Theorem~\ref{th:main1}} \label{section:proof_of_main_theorem}
We proceed in steps. We first construct weak solutions with $\delta,c_0>0$ sending the inertial parameters to zero  in Section \ref{subsection:vanished_inertia}. Then we send $\delta \to 0$  in Section \ref{subsection:vanishing_damping}. Finally, we remark that $c_0$ can be sent to zero in the by now standard manner. 

 \subsection{Vanishing Inertia: $\rho_b$, $ \rho_f \to 0$} \label{subsection:vanished_inertia}
The first step in proving Theorem~\ref{th:main1} is to prove the existence of a weak solution for $\rho_b=\rho_f = 0$. In this intermediate step, we fix $\delta>0$. This allows us to control the quantity $\bu_t$ independent of the parameter $\rho_b$.  Also, in this section we hold $c_0>0$. Under these assumptions, the filtration system reduces to
\begin{align}\label{introsystem3}
- \mu\Delta(\vec{u}+\delta\bu_t) - (\lambda + \mu)\nabla(\nabla\cdot(\vec{u}+\delta \bu_t)) + \alpha \nabla p_b = \vec{F}_b, &\text{ in } \Omega_b \times (0,T),\\ 
    [c_0p_b + \alpha \nabla \cdot \vec{u}]_t - \nabla \cdot [k\nabla p_b] = S, &\text{ in } \Omega_b \times (0,T)\\
  -\nu\Delta \bv +\nabla p_f=\mathbf F_f;~~\nabla \cdot \bv = 0, & \text{ in } \Omega_f \times (0,T).
\end{align}
Since the limit system is now first order in time, the requisite initial data is 
$$\bu(0)=\bu_0 \in \vec{U},~~ [c_0p+\alpha\nabla \cdot \bu](0)=d_0 \in L^2(\Omega_b).$$

We denote $\vec{\rho} \equiv (\rho_b, \rho_f)$. As we are performing a construction of solutions, when writing $\vec{\rho} \rightarrow 0$ we abuse the notation and mean to choose a sequence (perhaps refining to a subsequence) so that $\rho = \left((\rho_b)_n, (\rho_f)_n\right)=(\rho_n,\rho_n) \to (0,0)$.

We will obtain an auxiliary result for weak solutions by using the semigroup solutions coming from Theorem \ref{th}.

\begin{theorem} \label{theorem:vanished_inertia} Suppose $d_0 \in L^2(\Omega_b),~\bu_0 \in \mathbf U$ with $\mathbf F_f \in L^2(0,T;[\vec{H}^1_{\#,\ast}(\Omega_f)\cap \vec{V}]')$, $\mathbf F_b \in L^2(0,T;\mathbf L^2(\Omega_b))$, and $S \in L^2(0,T; [H^{1}_{\#,*}(\Omega_b)]')$. 
   Then for any $\delta, c_0> 0$, with $\rho_b=\rho_f = 0$, the dynamics in \eqref{introsystem3} admit a weak solution $(\overline{\bu}, \overline{p}, \overline{\bv})^T \in \mathcal{V}_{sol}$. The constructed weak solution  satisfies the energy inequality in \eqref{eidenttt} (taking $\rho_b = \rho_f = 0$). Furthermore,  $ \overline{\mathbf u} \in \mathcal V_b$ has the property that $\gamma_0[\overline{\mathbf u}]_t \cdot \btau \in L^2(0,T;L^2(\Gamma_I))$ for $\btau=\mathbf e_1,\mathbf e_2$.
\end{theorem}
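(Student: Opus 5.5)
We construct the solution as a weak limit of the inertial, viscoelastic solutions supplied by Corollary~\ref{coro:regular_strong_weak_existence}(3). Fix $\delta,c_0>0$ and a sequence $\rho_n\searrow 0$, and set $(\rho_b)_n=(\rho_f)_n=\rho_n$. For the initial data we take $\bu_0$ and $p_0\equiv c_0^{-1}(d_0-\alpha\nabla\cdot\bu_0)\in L^2(\Omega_b)$, and we put $\bu_1=\mathbf 0$ and $\bv_0=\mathbf 0$. This gives $\mathbf y_0=(\bu_0,\mathbf 0,p_0,\mathbf 0)\in X$. Let $(\bu^n,p^n,\bv^n)$ be the resulting weak solutions. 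These satisfy \eqref{weakform} with $\rho_b=\rho_f=\rho_n$ and the energy inequality \eqref{eidenttt}. The inertial part of $e^{\rho}(0)$ vanishes for this choice of data, so the right-hand side of \eqref{eidenttt} is bounded independently of $n$. The implied constant should not depend on $\rho$. To ensure this, we absorb the forcing terms using Young's inequality against the dissipator rather than against the inertial terms. Specifically, $\langle S,p\rangle$ is absorbed by $k\|\nabla p\|^2$ and $\langle\bF_f,\bv\rangle$ by $2\nu\|\mathbf D(\bv)\|^2$ via Korn's inequality on $\vec{H}^1_{\#,*}(\Omega_f)$. The term $(\bF_b,\bu_t)$ is absorbed by $\delta\|\bu_t\|_E^2$; this is exactly where $\delta>0$ is used. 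This yields uniform bounds on
\[
\bu^n\in L^\infty(0,T;\mathbf U),\quad \bu^n_t\in L^2(0,T;\mathbf U),\quad p^n\in L^\infty(0,T;L^2)\cap L^2(0,T;H^1_{\#,*}(\Omega_b)),\quad \bv^n\in L^2(0,T;\vec{H}^1_{\#,*}(\Omega_f)\cap\mathbf V),
\]
together with $\sqrt{\rho_n}\,\bu^n_t$ bounded in $L^\infty(0,T;\mathbf L^2)$ and $\sqrt{\rho_n}\,\bv^n$ bounded in $L^\infty(0,T;\mathbf L^2)$.

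By Banach--Alaoglu we extract a subsequence converging weakly-* to $(\overline\bu,\overline p,\overline\bv)$ in these spaces, with $\overline\bu_t$ the weak limit of $\bu^n_t$ in $L^2(0,T;\mathbf U)$. Since the trace is continuous from $\mathbf U$ into $L^2(\Gamma_I)$, the traces of $\bu^n,\bu^n_t,\bv^n,p^n$ on $\Gamma_I$ converge weakly in $L^2(0,T;L^2(\Gamma_I))$. In particular $\gamma_0[\overline\bu]_t\cdot\btau\in L^2(0,T;L^2(\Gamma_I))$, which is property (2) of Definition~\ref{weaksols}. Next we pass to the limit in \eqref{weakform} for a fixed test triple $[\bxi,q_b,\bzeta]\in\mathcal V_{\rm test}$. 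Every term other than the inertial ones is linear in the solution and continuous in the weak topologies above; this includes the damping term $\delta((\sigma^E(\bu^n_t),\nabla\bxi))$ inside $((\sigma_b,\nabla\bxi))$ and all interface terms. The inertial terms vanish in the limit because
\[
|\rho_n((\bu^n_t,\bxi_t))|\le \sqrt{\rho_n}\,\|\sqrt{\rho_n}\bu^n_t\|_{L^\infty L^2}\|\bxi_t\|_{L^1L^2}\to0,
\]
and the same estimate handles $\rho_n((\bv^n,\bzeta_t))$. The initial-data terms carrying $\rho_n$ vanish as well (they are zero here by our choice of data, and would be $O(\rho_n)$ in general). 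The limit therefore satisfies \eqref{weakform} with $\rho_b=\rho_f=0$ and data $\bu_0,d_0$, so $(\overline\bu,\overline p,\overline\bv)\in\mathcal V_{\rm sol}$. The energy inequality \eqref{eidenttt} for the limit follows from weak lower semicontinuity of the norms in $e^0(t)$ and $d^\delta(t)$ after integrating against nonnegative time cutoffs. The slip term $\beta\|(\bv-\bu_t)\cdot\btau\|^2_{\Gamma_I}$ is included in this because the weak trace convergence was established above.

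The main obstacle is making the a priori bound genuinely $\rho$-independent, above all the control of $\bu^n_t$. Without $\delta$, the only handle on $\bu_t$ is $\rho_b\|\bu_t\|^2$, which degenerates in the limit. The forcing $(\bF_b,\bu_t)$ and the slip term then give no uniform information, and $\gamma_0[\overline\bu]_t\cdot\btau$ could not be identified. To pin this down, we will justify \eqref{eidenttt} with constants independent of $\rho$ by first proving it for strong solutions. For these we use Corollary~\ref{coro:regular_strong_weak_existence}(2) with data in $\mathcal D(\cA_\delta)$ and smooth forcing, where $\bu_t\in\mathbf U$ can legitimately serve as a test function. We then pass to general data by density at fixed $n$. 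A secondary technical point is that the pressure trace term $((p,(\bzeta-\bxi)\cdot\mathbf e_3))_{\Gamma_I}$ requires only the trace of $p\in L^2(0,T;H^1)$, so it also passes to the limit weakly.
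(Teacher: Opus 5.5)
Your proposal is correct and follows essentially the same route as the paper. Both arguments take the inertial, viscoelastic solutions of Corollary~\ref{coro:regular_strong_weak_existence}(3) with fixed data and a $\rho$-uniform energy bound, extract limits by Banach--Alaoglu, pass to the limit term by term in \eqref{weakform} with the inertial terms vanishing, and obtain the energy inequality by weak lower semicontinuity; the tangential trace property then follows from that estimate, or directly from $\overline{\bu}_t\in L^2(0,T;\mathbf U)$. Your choice $\bu_1=\mathbf 0$, $\bv_0=\mathbf 0$ and your explicit absorption of the forcing terms into the dissipator (using $\delta>0$ for $(\mathbf F_b,\bu_t)$) only make precise the $\rho$-independence of the constant, which the paper asserts via the monotonicity of $e^{\rho}(0)$ in $\rho$.
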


\begin{remark} When $\rho_b = \rho_f = 0$, the constituent spaces reduce to
\begin{align*}
    \mathcal{V}_b &= \{\vec{u} \in L^\infty(0,T;\vec{U}) ~:~\delta \bu_t \in L^2(0,T;\mathbf U)\}, \\
    \mathcal{Q}_b &= \left\{p \in L^2(0,T; H^1_{\#,*}(\Omega_b))~:~c_0p \in L^\infty(0,T;L^2(\Omega_b))\right\}, \\
    \mathcal{V}_f &= \left\{ \bv \in L^2(0,T; \vec{H}^1_{\#,\ast}(\Omega_f) \cap \vec{V})~\right\}.
\end{align*}

\end{remark}
\begin{proof}[Proof of Theorem \ref{theorem:vanished_inertia}]
Fix any arbitrary  $\bu_1 \in \vec{L}^2(\Om_b)$ and $\bv_0 \in \bV$. Let $\vec{y}_0 = [\vec{u}_0,\vec{u}_1,p_0,\vec{v}_0]^T\in X$. For any $\rho_b,~\rho_f > 0$ and a fixed initial condition $\vec{y}_0$, Corollary \ref{coro:regular_strong_weak_existence} (3) guarantees the existence of a weak solution $(\bu^{\rho},p^{\rho},\bv^{\rho})$. Furthermore,  $(\bu^{\rho},p^{\rho},\bv^{\rho})$ satisfies the following energy estimate
\begin{align}
    e^{\rho}(t) + d^{\delta}(t) \lesssim & ~ e^{\rho}(0) + \int_0^t\big[||\mathbf F_b(\tau)||^2_{\vec{L}^2(\Omega_b)}+||\mathbf F_f(\tau)||^2_{[\mathbf H^1_{\#,*}(\Omega_f)\cap \mathbf V]'}+||S(\tau)||_{[H^1_{\#,*}(\Omega_b)]'}^2 \big]d\tau,
\end{align}
where we recall
\begin{align*} e^{\rho}(t) \equiv &~ \frac{1}{2}\Big[\rho_b\|\vec{u}_t^{\rho}\|_{0,b}^2 + \|\vec{u}^{\rho}\|_E^2 + c_0\|p^{\rho}\|_{0,b}^2 + \rho_f\|\vec{v}^{\rho}\|_{0,f}^2\Big],\ \\ 
d^{\delta}(t) \equiv&~ \int_0^t\big[\delta ||\bu_t^{\rho}||_E^2+k\|\nabla p^{\rho}\|_{0,b}^2 
+ 2\nu \|\vec{D}(\vec{v}^{\rho})\|_{0,f}^2 
+ \beta \|(\vec{v}^{\rho}-\vec{u}^{\rho}_t)\cdot\boldsymbol{\tau} \|_{\Gamma_I}^2 \big] d\tau,\end{align*}
and we note that, since the initial conditions are fixed, $e^{\vec{\rho}_1}(0) \le e^{\vec{\rho}_2}(0)$ for all $0<\rho_1<\rho_2$. The upper bound in the energy inequality is therefore uniform in $\vec{\rho}$.

We now note that the quantity $\{\rho_b^{1/2}\bu_t^{\rho}\}$ is bounded in $L^{\infty}(0,T;\vec{L}^2(\Omega_b))$ (from the energy estimate). Thus this sequence has a weak-$*$ subsequential limit. Additionally, restricting to that subsequence, and relabeling, we next note that the dissipator $d^{\delta}$ yields that $\bu_t^{\rho}$ is bounded in $L^2(0,T; \vec{U})$. Therefore the sequence also has a weak subsequential limit in the latter sense, which coincides with $\overline \bu_t$. Again, restricting to the further subsequence and relabeling, we have a coincident limit for the sequence $\rho_b^{1/2}\bu_t^{\rho}$ in $L^{\infty}(0,T;\vec{L}^2(\Omega_b))\cap L^2(0,T;\vec{U})$.  On the other hand, since $\rho_b \searrow 0$ and  $\{\rho_b^{1/2}\bu_t^{\rho}\}$ is bounded in $L^2(0,T;\vec{U})$, it must be the case that $\{\rho_b^{1/2}\bu_t^{\rho}\}$ goes to zero in both senses. By a similar argument, we also have the weak-$*$ convergence of $\{\rho_f^{1/2}\bv^{\rho}\}$ to $\vec{0}$ in $L^\infty (0,T;\vec{L}^2(\Omega_f))$ and the weak convergence of $\{\bv^\rho\}$ in $\rho$ to some limit $\overline{\bv}$ in $L^2(0,T;H^1_{\#,*}(\Omega_f))$.

By Banach-Alaoglu, for the sequence $\{\bu^{\rho},p^{\rho}\}$, we thus have the following weak-$*$ (subsequential) limits and limit points, upon relabeling:
\begin{align*}
\bu^{\rho} \rightharpoonup^* &~\overline{\bu} \in L^{\infty}(0,T;\mathbf U)\\
p^{\rho}  \rightharpoonup^* &~\overline{p} \in L^{\infty}(0,T;L^2(\Omega_b)).
\end{align*}
From the dissipator, we  have the weak limits, which are identified as above (by uniqueness):
\begin{align*}
\bu^{\rho}_t \rightharpoonup &~\overline{\bu}_t  \in L^{2}(0,T;\vec{U})\\
\bv^{\rho} \rightharpoonup &~\overline{\bv} \in L^{2}(0,T;\mathbf  H^1_{\#,*}(\Omega_f))\\
p^{\rho}  \rightharpoonup &~\overline{p} \in L^{2}(0,T;H^1_{\#,*}(\Omega_b))\\
(\bv^{\rho}-\bu_t^{\rho})\cdot \btau \rightharpoonup &~ (\overline{\mathbf v} - \overline{\bu}_t ) \cdot \btau \in L^2(0,T;L^2(\Gamma_I)).
\end{align*}

We now consider the terms in the weak formulation \eqref{weaksols} individually, and justify their convergence. We recall that
   $$ [\bxi,q,\bzeta]^T \in \mathcal{V}_\text{test} = C^1_0([0,T); \vec{U} \times  H^1_{\#,*}(\Omega_b) \times (\vec{H}^1_{\#,\ast}(\Omega_f)\cap \vec{V})).$$

Since $p^{\rho}  \rightharpoonup ~\overline{p} \in L^{2}(0,T;H^1_{\#,*}(\Omega_b))$ and $\bu^{\rho} \rightharpoonup^* ~\overline{\bu} \in L^{\infty}(0,T;\mathbf U)$, we can infer that 
\begin{equation}\label{weake1}   \sigma^E(\bu^\rho)- \alpha p^\rho\vec{I}\rightharpoonup \sigma^E(\overline{\vec{u}}) - \alpha \overline{p} \vec{I} \in  L^2(0,T; [L^2(\Omega_b)]^{3\times 3}),\end{equation} noting the embedding $L^2(0,T;Z) \hookrightarrow L^1(0,T;Z)$. Combining this with the fact that  $\bu^{\rho}_t \rightharpoonup ~\overline{\bu}_t \in L^{2}(0,T;\vec{U})$, we have
\begin{align}
    ((\sigma_b(\bu^\rho,p^\rho), \nabla \bxi ))_{\Omega_b} \longrightarrow~ &~((\sigma^E(\overline{\bu}), \nabla \bxi ))_{\Omega_b} -  \alpha ((\overline{p} \vec{I} , \nabla \bxi))_{\Omega_b} +\delta((\sigma^E(\overline{\bu}_t) , \nabla \bxi))_{\Omega_b}\\
    &~ \equiv ((\sigma_b(\overline{\bu},\overline p), \nabla \bxi ))_{\Omega_b}. 
\end{align}
From $\bv^{\rho} \rightharpoonup ~\overline{\bv} \in L^{2}(0,T;\mathbf \mathbf H^1_{\#,*}(\Omega_f))$, it is immediate that
\begin{equation}\label{weake2}\mathbf D(\bv^{\rho}) \rightharpoonup \mathbf D(\overline{\bv}) \in L^2(0,T;[L^2(\Omega_f)]^{3\times 3}).\end{equation}
Therefore, with $\mathbf D(\bzeta) \in L^2(0,T; [L^2(\Omega_f)]^{3\times 3})$,
\begin{align}
     2\nu((\vec{D}(\vec{v}^{\rho}),\vec{D}(\bzeta)))_{\Omega_f}\to&~2\nu((\vec{D}(\overline{\vec{v}}),\vec{D}(\bzeta)))_{\Omega_f}
\end{align}
Next, since $\bxi_t \in L^1(0,T;\mathbf L^2(\Omega_b))$ and $\bu^{\rho}_t\rightharpoonup^* \overline{\bu}_t \in L^{\infty}(0,T;\mathbf L^2(\Omega_b))$, 
\begin{align}
((\vec{u}_t^{\rho},\bxi_t))_{\Omega_b}  \to&~ ((\overline{\bu}_t,\bxi_t))_{\Omega_b} 
\end{align}
and therefore
\begin{align}
\rho_b((\vec{u}_t^{\rho},\bxi_t))_{\Omega_b}  \to&~ 0. 
\end{align}

Similarly, the following convergences hold
       \begin{align*}
     ((c_0 p^{\rho} + \alpha\nabla\cdot\vec{u}^{\rho}, \partial_t q))_{\Omega_b} \to &~    ((c_0 \overline p + \alpha\nabla\cdot \overline{\vec{u}}, \partial_t q))_{\Omega_b}\\ 
       \text{since} &  \text{  $q_t \in L^1(0,T; L^2(\Omega_b))$} \\ \text{and} & \text{ $c_0p^{\rho}+\nabla\cdot \bu^{\rho}\rightharpoonup^* c_0\overline p +\nabla \cdot \overline{\bu} \in L^{\infty}(0,T; L^2(\Omega_b))$}  \\[.2cm]
     ((k\nabla p^{\rho},\nabla q))_{\Omega_b} \to & ~ ((k\nabla \overline p, \nabla q))_{\Omega_b} \\
     \text{since} &  \text{  $\nabla q \in L^2(0,T; \mathbf L^2(\Omega_b))$} ~~ \text{and $\nabla p^{\rho} \rightharpoonup \nabla \overline p \in L^{2}(0,T; \mathbf L^2(\Omega_b))$}  \\[.2cm]
     \rho_f((\vec{v}^{\rho},\bzeta_t))_{\Omega_f} \to &~  
     0 \\ 
            \text{since} & \text{ $\bzeta_t \in L^1(0,T; \mathbf L^2(\Omega_f))$ and $\rho_f\bv^{\rho} \rightharpoonup^* \vec{0} \in L^{\infty}(0,T;\mathbf L^2(\Omega_f))$}
   \end{align*}

Now we consider the trace terms. We bear in mind the standard trace theorem on $H^1$-type spaces, i.e., $\mathbf {H}_\#^1(\Omega_i) \xrightarrow{\gamma_0} \vec{H}^{1/2}(\partial \Omega_i) \hookrightarrow \vec{L}^2(\partial \Omega_i)$,  but we suppress  trace operators $\gamma_0[\cdot]$; we then have
   \begin{align*}
       ~  (({p^{\rho},(\bzeta-\bxi)\cdot\vec{e}_3}))_{\Gamma_I} \to&~  (({\overline p,(\bzeta-\bxi)\cdot\vec{e}_3}))_{\Gamma_I}  \\ 
    \text{since }& \text{ $(\bzeta-\bxi)  \in L^2(0,T;\mathbf L^2(\Gamma_I))$} ~~ \text{and $p^{\rho} \rightharpoonup \overline{p} \in L^{2}(0,T;\mathbf L^2(\Gamma_I))$}  \\[.2cm]
     (({\vec{v}^{\rho}\cdot\vec{e}_3,q}))_{\Gamma_I} \to &~  ((\overline{\vec{v}}\cdot\vec{e}_3,q))_{\Gamma_I} \\
       \text{since }& \text{ $q \in L^2(0,T; L^2(\Gamma_I))$} ~~ \text{and $\bv^{\rho}\rightharpoonup \overline{\bv} \in L^{2}(0,T;\mathbf L^2(\Gamma_I))$}  \\[.2cm]
     (({\vec{u}^{\rho}\cdot\vec{e}_3,\partial_tq}))_{\Gamma_I} \to &~ (({\overline{\vec{u}}\cdot\vec{e}_3,\partial_tq}))_{\Gamma_I}\\
       \text{since }& \text{ $q_t \in L^1(0,T; L^2(\Gamma_I))$} ~~ \text{and $\bu^{\rho}\rightharpoonup^* \overline{\bu} \in L^{\infty}(0,T;\mathbf L^2(\Gamma_I))$} 
       \end{align*}
       \begin{align*}
     \beta (({\vec{v}^{\rho}\cdot\btau,(\bzeta - \bxi)\cdot\btau}))_{\Gamma_I} \to &~ \beta (({\overline{\vec{v}}\cdot\btau,(\bzeta - \bxi)\cdot\btau}))_{\Gamma_I} \\
      \text{since }& \text{ $(\bzeta-\bxi)  \in L^2(0,T;\mathbf L^2(\Gamma_I))$} ~~ \text{and  $\bv^{\rho}\rightharpoonup \overline{\bv} \in L^{2}(0,T;\mathbf L^2(\Gamma_I))$ }  \\[.2cm]
     \beta(({\vec{u}^{\rho}\cdot\btau, (\bzeta_t - \bxi_t)\cdot\btau}))_{\Gamma_I} \to &~ \beta(({\overline{\vec{u}}\cdot\btau, (\bzeta_t - \bxi_t)\cdot\btau}))_{\Gamma_I} \\
    \text{since }& \text{ $(\bzeta-\bxi)_t  \in L^1(0,T;\mathbf L^2(\Gamma_I))$} ~~~~ \text{and $\bu^{\rho}\rightharpoonup^* \overline{\bu} \in L^{\infty}(0,T;\mathbf L^2(\Gamma_I))$}  
   \end{align*}

Finally, since the same initial conditions apply for all values $\vec{\rho}$ by construction, 
\begin{align*}
&~ \rho_b(\bu_1,\bxi)_{\Omega_b}\big|_{t=0} + (c_0 p_0 + \alpha\nabla\cdot \bu_0,q)_{\Omega_b}\big|_{t=0} + \rho_f(\bv_0,\bzeta)_{\Omega_f}\big|_{t=0} + ({\bu_0\cdot\vec{e}_3,q})_{\Gamma_I}\big|_{t=0} \nn\\
    &~- \beta({\bu_0\cdot\btau, (\bzeta - \bxi)\cdot\btau})_{\Gamma_I}\big|_{t=0}+ ((\vec{F}_b,\bxi))_{\Omega_b} + (\langle S,q\rangle)_{\Omega_b} + (\langle \vec{F}_f,\bzeta\rangle)_{\Omega_f}
\end{align*}
converges to
\begin{align*}
    &~ (d_0,q)_{\Omega_b}\big|_{t=0} + ({\bu_0\cdot\vec{e}_3,q})_{\Gamma_I}\big|_{t=0} \nn\\
    &~- \beta({\bu_0\cdot\btau, (\bzeta - \bxi)\cdot\btau})_{\Gamma_I}\big|_{t=0}+ (\langle \vec{F}_b,\bxi\rangle)_{\Omega_b} + (\langle S,q\rangle )_{\Omega_b} + (\langle \vec{F}_f,\bzeta\rangle)_{\Omega_f}.
\end{align*}

Thus, as $\delta$ is fixed, we may pass to the limit as $\vec\rho \to 0$ in the weak formulation to obtain for all $[\bxi,q,\bzeta]^T \in \mathcal V_{\text{test}}$:
\begin{align}\label{weaknnn}
    &~((\sigma_b(\overline{\vec{u}} ,\overline p ), \nabla \bxi))_{\Omega_b} - ((c_0 \overline p  + \alpha\nabla\cdot\overline{\vec{u}} , \partial_t q))_{\Omega_b} + ((k\nabla \overline p ,\nabla q))_{\Omega_b} \nn\\
    &+ 2\nu((\vec{D}(\overline{\vec{v}} ),\vec{D}(\bzeta)))_{\Omega_f} + (({\overline p ,(\bzeta-\bxi)\cdot\vec{e}_3}))_{\Gamma_I} - (({\overline{\vec{v}} \cdot\vec{e}_3,q}))_{\Gamma_I} \nn\\
    &- ((\overline{\vec{u}} \cdot\vec{e}_3,\partial_tq))_{\Gamma_I} + \beta (({\overline{\vec{v}} \cdot\btau,(\bzeta - \bxi)\cdot\btau}))_{\Gamma_I} + \beta(({\overline{\vec{u}} \cdot\btau, (\bzeta_t - \bxi_t)\cdot\btau}))_{\Gamma_I} \nn\\
    =&~ (d_0 ,q)_{\Omega_b}\big|_{t=0}  + ({\bu_0 \cdot\vec{e}_3,q})_{\Gamma_I}\big|_{t=0} \nn\\
    &~- \beta({\bu_0 \cdot\btau, (\bzeta - \bxi)\cdot\btau})_{\Gamma_I}\big|_{t=0}+ ((\vec{F}_b,\bxi))_{\Omega_b} + (\langle S,q\rangle)_{\Omega_b} + (\langle \vec{F}_f,\bzeta\rangle)_{\Omega_f}.
\end{align}
This shows the existence of a weak solution of the Biot-Stokes system with $\rho=(\rho_b,\rho_f)=\mathbf 0$, i.e., no inertia.

Next, observing our weak limit points (and subsequent identifications), and invoking weak-lower-semicontinuity of the norm, we obtain the estimate
\begin{align}\label{eidentn**}
  & \|\overline{\vec{u}}(t)\|_E^2 + c_0\|\overline p(t)\|_{0,b}^2 + \nn\\
    &+ \delta \int_0^t \left\|\overline{\bu}_t\right\|_E^2 d\tau   + k\int_0^t \|\nabla \overline p\|_{0,b}^2 d\tau + 2\nu\int_0^t\|\vec{D}(\overline{\vec{v}})\|_{0,f}^2d\tau + \int_0^t \beta\|(\overline{\vec{v}} - \overline{\vec{u}}_t)\cdot\btau\|_{\Gamma_I}^2d\tau \nn\\
    \lesssim &~ \|\bu_0\|_E^2 + c_0 \|p_0\|_{0,b}^2  \\ \nn &+\int_0^t[||\mathbf F_b||_{\vec{L}^2(\Omega_b)}^2+||\mathbf F_f||_{(\vec{H}^1_{\#,\ast}(\Omega_f)\cap \vec{V})'}^2+||S||^2_{[H^1_{\#,*}(\Omega_b)]'}]d\tau.
\end{align}

Finally, we observe the trace regularity for $\overline{\bu}_t$. Namely, our weak solution clearly has that $\gamma_0[(\overline{\mathbf v}-\overline{\mathbf u}_t) \cdot \btau] \in L^2(0,T; L^2(\Gamma_I))$ from the final estimate in \eqref{eidentn**}. Therefore $\gamma_0[\overline{\mathbf u}]_t \cdot \btau \in L^2(0,T;L^2(\Gamma_I))$ for $\btau=\mathbf e_1,\mathbf e_2$. 
\end{proof}

 \subsection{Vanishing Damping: $\delta \to 0$} \label{subsection:vanishing_damping}
 Now, we will consider the quasi-static Biot system when $\rho_b = \rho_f = 0$ and send $\delta \rightarrow 0$. The damped, quasi-static filtration system reduces to
\begin{align}\label{introsystem4}
- \mu\Delta\vec{u} - (\lambda + \mu)\nabla(\nabla\cdot\vec{u}) + \alpha \nabla p_b = \vec{F}_b, &\text{ in } \Omega_b \times (0,T),\\ 
    [c_0p_b + \alpha \nabla \cdot \vec{u}]_t - \nabla \cdot [k\nabla p_b] = S, &\text{ in } \Omega_b \times (0,T)\\
  -\nu\Delta \bv +\nabla p_f=\mathbf F_f;~~\nabla \cdot \bv = 0, & \text{ in } \Omega_f \times (0,T).
\end{align}
The relevant initial conditions for the limit system are reduced to
\begin{align} \label{equation:initial_condition2}
\bu(0)=\bu_0 \in \mathbf U,~~ [c_0p+\alpha\nabla \cdot \bu](0)=d_0 \in L^2(\Omega_b).
\end{align}

Once more, by writing $\delta \rightarrow 0$, we abuse the notation and mean to choose a sequence, if necessary a subsequence of, $(\delta_n)_{n \in \mathbb{N}}$ that converges to $0$. Our auxiliary result here is below. 
\begin{theorem}\label{th2}
     Suppose $d_0 \in L^2(\Omega_b)$, $\bu_0\in \mathbf U$, and $\mathbf F_f \in L^2(0,T;[\vec{H}^1_{\#,\ast}(\Omega_f)\cap \vec{V}]')$, $\mathbf F_b \in L^2(0,T;\mathbf L^2(\Omega_b))$, and $S \in L^2(0,T; [H^{1}_{\#,*}(\Omega_b)]')$. 
   Then for $\rho_b=\rho_f = \delta = 0$ and any $c_0 > 0$, the dynamics in \eqref{introsystem4} admit a weak solution $(\overline{\bu}, \overline{p}, \overline{\bv})^T \in \mathcal{V}_{sol}$. Furthermore, $\overline{\mathbf u} \in \mathcal V_b$ has  that $\gamma_0[\overline{\mathbf u}]_t \cdot \btau \in L^2(0,T;L^2(\Gamma_I))$ for $\btau=\mathbf e_1,\mathbf e_2$.
   The weak solution  satisfies the  energy inequality in \eqref{eidenttt} with $\delta=\rho_b = \rho_f = 0$. 
 \end{theorem}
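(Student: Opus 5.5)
The plan is to apply Theorem \ref{theorem:vanished_inertia} for a sequence $\delta_n \searrow 0$, holding $c_0>0$ and the data $\bu_0$, $d_0$, $\mathbf F_b$, $\mathbf F_f$, $S$ fixed. This gives quasi-static, damped weak solutions $(\bu^\delta,p^\delta,\bv^\delta)$ that satisfy \eqref{weaknnn} and the estimate \eqref{eidentn**}. The right-hand side of \eqref{eidentn**} depends only on the data, with $p_0 = c_0^{-1}(d_0-\alpha\nabla\cdot\bu_0)$. It is therefore uniform in $\delta$, and we obtain $\delta$-uniform bounds:
\begin{itemize}
\item $\bu^\delta$ is bounded in $L^\infty(0,T;\mathbf U)$;
\item $p^\delta$ is bounded in $L^\infty(0,T;L^2(\Omega_b))\cap L^2(0,T;H^1_{\#,*}(\Omega_b))$;
\item $\bv^\delta$ is bounded in $L^2(0,T;\mathbf H^1_{\#,*}(\Omega_f)\cap\mathbf V)$;
\item $(\bv^\delta-\bu^\delta_t)\cdot\btau$ is bounded in $L^2(0,T;L^2(\Gamma_I))$;
\item $\delta^{1/2}\bu^\delta_t$ is bounded in $L^2(0,T;\mathbf U)$.
\end{itemize}
By Banach--Alaoglu we extract weak and weak-$*$ limits $\overline\bu,\overline p,\overline\bv$. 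From the last bound, $\delta\bu^\delta_t=\delta^{1/2}\cdot\delta^{1/2}\bu^\delta_t\to 0$ strongly in $L^2(0,T;\mathbf U)$. Hence the damping term $\delta((\sigma^E(\bu^\delta_t),\nabla\bxi))_{\Omega_b}$ vanishes in the limit.

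Passing to the limit in \eqref{weaknnn} then follows the proof of Theorem \ref{theorem:vanished_inertia} term by term. We pair weak(-$*$) convergent sequences against fixed test functions in the dual time-integrability class, using the trace theorem for the $\Gamma_I$ terms. The point to check is that the weak formulation never involves $\bu_t$ directly: all time derivatives fall on test functions, through $\partial_t q$, $\bxi_t$ and $\bzeta_t$. So weak-$*$ convergence of $\bu^\delta$ in $L^\infty(0,T;\mathbf U)$ suffices for $((\alpha\nabla\cdot\bu^\delta,\partial_t q))$, $((\bu^\delta\cdot\mathbf e_3,\partial_t q))_{\Gamma_I}$ and $\beta((\bu^\delta\cdot\btau,(\bzeta_t-\bxi_t)\cdot\btau))_{\Gamma_I}$. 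The initial-data terms on the right-hand side are independent of $\delta$. The limit $(\overline\bu,\overline p,\overline\bv)$ therefore satisfies \eqref{weakform} with $\rho_b=\rho_f=\delta=0$ and lies in $\mathcal V_{\rm sol}$. In that space the requirement $\delta\bu_t\in L^2(0,T;\mathbf U)$ is vacuous.

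The main obstacle is part (2) of Definition \ref{weaksols}, the tangential trace regularity of $\overline\bu_t$ on $\Gamma_I$. Once $\delta\to0$ we lose all control of $\bu_t$ in $L^2(0,T;\mathbf U)$. My approach is to use the uniform bound on $\mathbf w^\delta:=(\bv^\delta-\bu^\delta_t)\cdot\btau$ in $L^2(0,T;L^2(\Gamma_I))$, which gives a weak limit $\mathbf w$, and then identify $\mathbf w$ distributionally. We know $\gamma_0[\bu^\delta]\cdot\btau\rightharpoonup^*\gamma_0[\overline\bu]\cdot\btau$ in $L^\infty(0,T;L^2(\Gamma_I))$, hence $\partial_t(\gamma_0[\bu^\delta]\cdot\btau)\to\partial_t(\gamma_0[\overline\bu]\cdot\btau)$ in $\mathcal D'((0,T);L^2(\Gamma_I))$. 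Also $\gamma_0[\bv^\delta]\cdot\btau\rightharpoonup\gamma_0[\overline\bv]\cdot\btau$ in $L^2(0,T;L^2(\Gamma_I))$. It follows that
\[
\partial_t(\gamma_0[\overline\bu]\cdot\btau)=\gamma_0[\overline\bv]\cdot\btau-\mathbf w\in L^2(0,T;L^2(\Gamma_I)),
\]
which is the claimed regularity.

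Finally, the energy inequality \eqref{eidenttt} with $\delta=\rho_b=\rho_f=0$ follows from weak lower semicontinuity applied to each term of \eqref{eidentn**}. We drop the nonnegative damping term and use $\mathbf w=(\overline\bv-\overline\bu_t)\cdot\btau$ for the slip term. For the pointwise-in-$t$ terms, we first integrate against nonnegative time weights, or argue a.e. in $t$ via the $L^\infty$ weak-$*$ limits.
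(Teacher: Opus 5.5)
Your argument has the same architecture as the paper's proof. You take the damped quasi-static solutions of Theorem \ref{theorem:vanished_inertia}, extract weak/weak-$*$ limits from \eqref{eidentn**}, pass to the limit term by term, recover the tangential trace of $\overline{\bu}_t$, and finish by lower semicontinuity. Two steps, however, are done differently.

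\begin{itemize}
\item \emph{Damping term.} The paper integrates by parts in time, see \eqref{eq:delta_to_0_weak_form}, and uses only the $L^\infty(0,T;\mathbf U)$ bound on $\bu^\delta$. This creates an endpoint term at $t=0$, which needs $\bu^\delta\in C([0,T];\mathbf U)$. The paper invokes $\bu^\delta(0)=\bu_0$ there, a fact that comes from the construction, not from Definition \ref{weaksols}. You instead use the dissipation term to get $\|\delta\bu^\delta_t\|_{L^2(0,T;\mathbf U)}\lesssim\delta^{1/2}$. This needs no time continuity and no initial-trace information.
\item \emph{Trace regularity.} The paper tests the limit formulation with $\bzeta=\mathbf 0$, $q_b=0$ (see \eqref{weakform3*}) to realize $\overline{\bu}_t\cdot\btau$ in $L^2(0,T;\gamma_0(\mathbf H^1_\#(\Omega_b))')$, and identifies limits there. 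You identify $\partial_t(\gamma_0[\overline{\bu}]\cdot\btau)$ through convergence in $\mathcal D'((0,T);L^2(\Gamma_I))$. This never uses the equations and is shorter. It also avoids the paper's extension of \eqref{weakform3*} from smooth $\bxi$ to all of $L^2(0,T;\mathbf H^1_\#(\Omega_b))$, which is delicate because the left side contains $\bxi_t$.
\end{itemize}
Your time-weight argument for lower semicontinuity of the pointwise-in-$t$ terms is also more careful than the paper's.

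There is one step that fails, and it is shared with the paper's proof: the claim that the right-hand side of \eqref{eidentn**} is uniform in $\delta$. In the energy balance the body force enters as $\int_0^t(\mathbf F_b,\bu_t)\,d\tau$. With $\rho_b=0$, the only control on $\bu_t$ is $\delta\int_0^t\|\bu_t\|_E^2\,d\tau$. So for $\mathbf F_b\in L^2(0,T;\mathbf L^2(\Omega_b))$, absorbing this term costs $\delta^{-1}\int_0^t\|\mathbf F_b\|_{0,b}^2\,d\tau$.

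This is more than a weak estimate; the bound genuinely fails.
\begin{itemize}
\item In Definition \ref{weaksols} with $\rho_b=\rho_f=\delta=0$, take $\bxi=\phi(t)\boldsymbol{\psi}$ with $\boldsymbol{\psi}\in\mathbf U$, $\boldsymbol{\psi}|_{\Gamma_I}=\mathbf 0$, and $q_b=0$, $\bzeta=\mathbf 0$. This gives $a_E(\bu(t),\boldsymbol{\psi})-\alpha(p(t),\nabla\cdot\boldsymbol{\psi})_{\Omega_b}=(\mathbf F_b(t),\boldsymbol{\psi})_{\Omega_b}$ for a.e. $t$.
\item Project $a_E$-orthogonally onto $\{\boldsymbol{\psi}\in\mathbf U:\boldsymbol{\psi}|_{\Gamma_I}=\mathbf 0\}$ and take $\mathbf F_b=f(t)\mathbf G$ with $\mathbf G\neq\mathbf 0$. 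One gets $|f(t)|\lesssim\|\bu(t)\|_E+\|p(t)\|_{0,b}$ for a.e. $t$.
\item Choose $f\in L^2(0,T)\setminus L^\infty(0,T)$. This contradicts $\bu\in L^\infty(0,T;\mathbf U)$ together with $c_0p\in L^\infty(0,T;L^2(\Omega_b))$, $c_0>0$.
\end{itemize}
So for such data no weak solution in $\mathcal{V}_{\text{sol}}$ exists at all. Your bounds on $\bu^\delta$ and $p^\delta$ (like the paper's) therefore require either $\mathbf F_b\equiv\mathbf 0$ or time regularity such as $\mathbf F_b\in H^1(0,T;\mathbf U')$. In the latter case, integrate $\int_0^t(\mathbf F_b,\bu_t)\,d\tau$ by parts in time and close with Gronwall, uniformly in $\delta$. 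With that repair, your argument goes through as written.
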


\begin{proof}[Proof of Theorem \ref{th2}] Let $(\bu^ \delta, p^ \delta, \bv^ \delta)$ be a weak solution to the Biot system with $\rho_b = \rho_f = 0$ and $\delta > 0$ and the initial conditions 
\eqref{equation:initial_condition2}, i.e
\begin{align}\label{weakform2}
        &~ ((\sigma^E(\vec{u}^\delta), \nabla \bxi))_{\Omega_b}  +\delta ((\sigma^E(\bu_t^ \delta),\nabla \bxi))_{\Omega_b} - ((c_0 p^\delta + \alpha\nabla\cdot\vec{u}^ \delta, \partial_t q_b))_{\Omega_b} + ((k\nabla p^\delta,\nabla q_b))_{\Omega_b} \nn\\
        &+ 2\nu((\vec{D}(\vec{v}^\delta),\vec{D}(\bzeta)))_{\Omega_f} + (({p^\delta,(\bzeta-\bxi)\cdot\vec{e}_3}))_{\Gamma_I} - (({\vec{v}^\delta\cdot\vec{e}_3,q_b}))_{\Gamma_I} \nn\\
        &- (({\vec{u}^\delta\cdot\vec{e}_3,\partial_tq_b}))_{\Gamma_I} + \beta (({\vec{v}^\delta\cdot\btau,(\bzeta - \bxi)\cdot\btau}))_{\Gamma_I} + \beta(({\vec{u}^\delta\cdot\btau, (\bzeta_t - \bxi_t)\cdot\btau}))_{\Gamma_I} \nn\\
        =&~ (d_0,q_b)_{\Omega_b}\big|_{t=0} + ({\vec{u}_0\cdot\vec{e}_3,q_b})_{\Gamma_I}\big|_{t=0} \nn\\
        &~- \beta({\vec{u}_0\cdot\btau, (\bzeta - \bxi)\cdot\btau})_{\Gamma_I}\big|_{t=0} + ((\vec{F}_b,\bxi))_{\Omega_b} + (\langle S,q_b\rangle)_{\Omega_b} + (\langle \vec{F}_f,\bzeta\rangle)_{\Omega_f},
    \end{align}
It remains to pass $\delta \rightarrow 0$ and show that $\delta ((\sigma^E(\bu_t^ \delta),\nabla \bxi))_{\Omega_b} \longrightarrow 0$. 

Since the Lam\'e coefficients are constant, 
\begin{align} \label{eq:delta_to_0_weak_form}
    \delta((\sigma^E(\bu_t^\delta),\nabla \bxi))_{\Omega_b} = - \delta((\sigma^E(\bu^\delta), \nabla \bxi_t))_{\Omega_b} - \delta(\sigma^E(\bu^\delta), \nabla \bxi)_{\Omega_b}\big|_{t=0} + \delta(\sigma^E(\bu^\delta), \nabla \bxi)_{\Omega_b}\big|_{t=T}.
\end{align}
The third term on the RHS in \eqref{eq:delta_to_0_weak_form} vanishes due to $\bxi \in C_0^1([0,T); \textbf{U} \times H^1_{\#, *}(\Omega_b ))$ while the second term vanishes as $\bu^\delta(0) = \bu(0) = \bu_0$ for all $\delta$. The first term on the RHS of 
\eqref{eq:delta_to_0_weak_form} is bounded above 
\begin{align}
 ((\sigma^E(\bu^ \delta), \grad \bxi_t))_{\Omega_b} &\lesssim \int_{0}^T \left\|\nabla \bu^\delta(t)\right\|_{L^2(\Omega_b)}\left\|\nabla \bxi_t(t)\right\|_{L^2(\Omega_b)}dt.
\end{align}
Recall that $\bu^ \delta$ satisfies the energy estimate \eqref{eidentn**}  with upper bound independent of $\delta$ and therefore  $\{\bu^ \delta\}_{\delta > 0}$ is bounded in $L^ \infty(0,T; \vec{U})$, which implies that $\delta((\sigma^E(\bu^ \delta), \grad \bxi_t))_{\Omega_b} \longrightarrow 0$.

 From the energy estimate \eqref{eidentn**}, we have the following weak-$*$ (subsequential) limits and limit points, upon relabeling:
\begin{align*}
\bu^{\delta} \rightharpoonup^* &~\overline{\bu} \in L^{\infty}(0,T;\mathbf U)\\
p^{\delta}  \rightharpoonup^* &~\overline{p} \in L^{\infty}(0,T;L^2(\Omega_b)).
\end{align*}
From the dissipator, we  have the weak limits, which are identified as above (by uniqueness):
\begin{align*}
\bv^{\delta} \rightharpoonup &~\overline{\bv} \in L^{2}(0,T;\mathbf  H^1_{\#,*}(\Omega_f))\\
p^{\delta}  \rightharpoonup &~\overline{p} \in L^{2}(0,T;H^1_{\#,*}(\Omega_b)).
\end{align*}
 Thus, when $\delta \rightarrow 0$ , the convergence of other terms in \eqref{weakform2} follows as in the proof of Theorem \ref{theorem:vanished_inertia}. More specifically, equation \eqref{weakform2}  converges to weak form of the quasi-static Biot system

\begin{align}\label{weakform3}
        &~ ((\sigma^E(\overline{\vec{u}}), \nabla \bxi))_{\Omega_b} - ((c_0 \overline{p} + \alpha\nabla\cdot\overline{\vec{u}}, \partial_t q_b))_{\Omega_b} + ((k\nabla \overline{p},\nabla q_b))_{\Omega_b} \nn\\
        &+ 2\nu((\vec{D}(\overline{\vec{v}}),\vec{D}(\bzeta)))_{\Omega_f} + (({\overline{p},(\bzeta-\bxi)\cdot\vec{e}_3}))_{\Gamma_I} - (({\overline{\vec{v}}\cdot\vec{e}_3,q_b}))_{\Gamma_I} \nn\\
        &- (({\overline{\vec{u}}\cdot\vec{e}_3,\partial_tq_b}))_{\Gamma_I} + \beta (({\overline{\vec{v}}\cdot\btau,(\bzeta - \bxi)\cdot\btau}))_{\Gamma_I} + \beta(({\overline{\vec{u}}\cdot\btau, (\bzeta_t - \bxi_t)\cdot\btau}))_{\Gamma_I} \nn\\
        =&~ (d_0,q_b)_{\Omega_b}\big|_{t=0} + ({\vec{u}_0\cdot\vec{e}_3,q_b})_{\Gamma_I}\big|_{t=0} \nn\\
        &~- \beta({\vec{u}_0\cdot\btau, (\bzeta - \bxi)\cdot\btau})_{\Gamma_I}\big|_{t=0} + ((\vec{F}_b,\bxi))_{\Omega_b} + (\langle S,q_b\rangle)_{\Omega_b} + (\langle \vec{F}_f,\bzeta\rangle)_{\Omega_f}
    \end{align}
and $(\overline{\bu}, \overline{p}, \overline{\bv})$ is the solution to the quasi-static Biot system.

Lastly, we study the energy estimate \eqref{eidentn**}  when $\delta \rightarrow 0$. We will show that 
\begin{align}
    (\bv^{\delta}-\bu_t^{\delta})\cdot \btau \rightharpoonup &~ (\overline{\bv} - \overline{ \mathbf{u}}_t) \cdot \btau \in L^2(0,T;L^2(\Gamma_I)).
\end{align}
Recall that for each $\delta > 0$, $\gamma_0[\bu^\delta]_t \cdot \btau \in L^2(0,T;L^2(\Gamma_I))$ for $\btau=\mathbf e_1,\mathbf e_2$.
It follows from \eqref{eidentn**} that $(\vec{v}^\delta - \vec{u}^\delta_t)\cdot\btau$ is bounded in $L^2(0,T;L^2(\Gamma_I))$ and therefore converges weakly to some limit $\overline{h} \in L^2(0,T;L^2(\Gamma_I))$. Furthermore, $\bv^\delta\cdot\btau$ converges weakly to $\overline{\bv}\cdot\btau$. Therefore, $\bu_t^ \delta \cdot \btau \rightharpoonup \overline{\bv}\cdot\btau - \bar{h}$. On the other hand, choose $\bzeta = 0$ and $q_b = 0$; then we have
\begin{align}\label{weakform3*}
        \beta(({\overline{\vec{u}}\cdot\btau, - \bxi_t\cdot\btau}))_{\Gamma_I} \nn\\
        =& ~((\vec{F}_b,\bxi))_{\Omega_b} - ((\sigma^E(\overline{\vec{u}}), \nabla \bxi))_{\Omega_b} - (({\overline{p},(-\bxi)\cdot\vec{e}_3}))_{\Gamma_I}   -  \beta (({\overline{\vec{v}}\cdot\btau,- \bxi\cdot\btau}))_{\Gamma_I} 
    \end{align}
for all $\bxi \in C_0^\infty(0,T;H_\#^1(\Omega_b))$, and therefore, for all $\bxi \in L^2(0,T;H_\#^1(\Omega_b))$. Owing to the existence of a continuous right inverse for the tangential trace, the right hand-side of \eqref{weakform3*} is a continuous functional with respect to $\bxi\cdot \btau \in L^2(0,T;\gamma_0(H_\#^1(\Omega_b)))$. Thus, $\overline{\bu}_t\cdot\btau$ exists as an element of $L^2(0,T;\gamma_0(H_\#^1(\Omega_b))')$. Furthermore, for all $\bxi \in L^2(0,T;H_\#^1(\Omega_b))$, 
\begin{align}
\beta(({\vec{u}^{\delta}\cdot\btau, (\bzeta_t - \bxi_t)\cdot\btau}))_{\Gamma_I} \to &~ \beta(({\overline{\vec{u}}\cdot\btau, (\bzeta_t - \bxi_t)\cdot\btau}))_{\Gamma_I},
\end{align}
therefore $\gamma_0(\vec{u}_t^{\delta})\cdot\btau \rightharpoonup (\gamma_0\overline{\bu})_t \cdot \btau $ in $L^2(0,T;\gamma_0(H^{1}_\#(\Omega_b))') \supset L^2(0,T;L^2(\Gamma_I))$. By uniqueness of  limits, $\overline{\bv}\cdot\btau - \overline{h} = (\gamma_0\overline{\bu})_t \cdot \btau$. Combining this fact with the weak-lower-semicontinuity of norms, we obtain the following estimate

\begin{align}\label{eidentn6}
  & \|\overline{\vec{u}}(t)\|_E^2 + c_0\|\overline p(t)\|_{0,b}^2 + k\int_0^t \|\nabla \overline p\|_{0,b}^2 d\tau + 2\nu\int_0^t\|\vec{D}(\overline{\vec{v}})\|_{0,f}^2d\tau + \int_0^t \beta\|(\overline{\vec{v}} - \overline{\vec{u}}_t)\cdot\btau\|_{\Gamma_I}^2d\tau \nn\\
    \lesssim &~ \|\bu_0\|_E^2 + c_0 \|p_0\|_{0,b}^2 +\int_0^t[||\mathbf F_b||_{\vec{L}^2(\Omega_b)}^2+||\mathbf F_f||_{(\vec{H}^1_{\#,\ast}(\Omega_f)\cap \vec{V})'}^2+||S||^2_{[H^1_{\#,*}(\Omega_b)]'}]d\tau.
\end{align}
This concludes the proof of Theorem \ref{th2}. 
\end{proof}

\subsection{Brief Remarks on $c_0 \searrow 0$}

Finally, we mention the case of allowing $c_0 =0$. This case may be treated in the same manner---and more straightforwardly---as the other limits. Indeed, the quasi-static dynamics allow for limit passage in the weak formulation as $c_0 \searrow 0$, since the pressure retains $p \in L^2(0,T;H^1_{\#,*}(\Omega_b))$ in the limit. The construction follows identically to that given in \cite{AGW} in the inertial case and hence we do not include it here. It is central to note that when $c_0=0$, we have the implicit compatibility condition that $\alpha \nabla \cdot \bu_0=d_0 \in L^2(\Omega_b)$. We thus conclude the validity of our main theorem, Theorem \ref{th:main1}. 

\begin{remark} We mention that our main result (and its proof) does not address additional time-regularity of the function $\bu \in L^{\infty}(0,T;\mathbf U)$. On the other hand, we require initializing the dynamics with $\bu_0 \in \mathbf U$. Our weak formulation in \eqref{weakform} carries the initial conditions on the right hand side. We do expect additional time continuity of both the quantity $\bu$ as well as the fluid content $[c_0p+\alpha \nabla \cdot \bu]$ in certain spaces, however, this is a delicate matter in these coupled models. Indeed, for the Biot dynamics alone, this was a central issue in \cite{bw}. Recently, the time regularity of solutions in the inertial cases was investigated for the purposes of uniqueness arguments in \cite{AW}; future work will address these issues for this fully quasi-static filtration case. 
\end{remark}

\section*{Appendix}
In this section, we elaborate on the semigroup approach to prove well posedness  of Biot-Stoke system for the case $\rho_b$, $\rho_f$, $\delta$ and $c_0$ are positive as mentioned in section \ref{subsection:Ancillary_Result}. To define the semigroup generator for $\mathcal A_{\delta}$ we  first address the elimination of the pressure, following the strategy in \cite{AGW}.

Suppose  $\mathbf F_f \in \mathbf V$ pointwise in time. Then we consider the fluid-pressure sub-problem, again pointwise in time:
\begin{equation}\label{bigellip}
    \begin{cases}
        \Delta p_f = 0 &\text{ in } \Omega_f,\\
        \partial_{\vec{e}_3}p_f = \nu \Delta\vec{v} \cdot\vec{e}_3 &\text{ on } \Gamma_f,\\
        p_f = p_b + 2\nu\vec{e}_3\cdot \vec{D}(\vec{v})\vec{e}_3  &\text{ on } \Gamma_I,\\
        p_f ~\text{ is }~x_1\text{-periodic} ~\text{ and }~ x_2\text{-periodic}& \text{ on } \Gamma_{\text{lat}}.
    \end{cases}
\end{equation}
Laplace's equation is obtained from  taking the divergence of the  fluid equation in \eqref{stokes1}; the condition on $\Gamma_f$ is obtained as the inner product of the  fluid equation with $\vec{e}_3$, restricted to $\Gamma_f$; and the condition on $\Gamma_I$ is simply \eqref{IC4}. Here, we recall that $\vec{e}_3 = \vec{n}_f$.

With the representation \eqref{bigellip} in mind, we define the Neumann and Dirichlet Green's maps $N_f$ and $D_I$ by
\[
    \phi \equiv N_fg \iff 
    \begin{cases}
        \Delta \phi = 0 & \text{in} ~\Omega_f;\\
        \partial_{\vec{e}_3} \phi = g &  \text{on} ~ \Gamma_f;\\
        \phi = 0 &  \text{on} ~ \Gamma_I;
    \end{cases}
    \quad \quad
    \psi \equiv D_I h \iff
    \begin{cases}
        \Delta \psi = 0 &  \text{in} ~\Omega_f;\\
        \partial_{\vec{e}_3} \psi = 0 &  \text{on} ~ \Gamma_f;\\
        \psi = h &  \text{on} ~\Gamma_I;
    \end{cases}
\]
where we have suppressed the  lateral periodicity of the pressure. 
The maps $N$ and $D$ are well-defined via classical elliptic theory for sufficiently regular data; in this setting, as periodicity effectively eliminates corners and preserves the  boundary regions as ``separated" and smooth. The mappings can then  be extended through transposition for weaker data \cite{redbook}:
\begin{align*}
    N_f &\in \mathcal{L}(H^s(\Gamma_f),  H^{s + \frac{3}{2}}(\Omega_f)),\quad D_I \in \mathcal{L}(H^s(\Gamma_I), H^{s + \frac{1}{2}}(\Omega_f)),\quad s \in \mathbb{R}.
\end{align*}

Then for given $p_b$ and $\vec{v}$ in appropriate spaces, we can write $$p_f = \Pi_1p_b + \Pi_2\vec{v} + \Pi_3\vec{v},~ \text{ where},$$
\begin{equation}\label{PiOps}
    \Pi_1p_b = D_I[(p_b)_{\Gamma_I}],\quad  \Pi_2\vec{v} = D_I\big[(\vec{e}_3 \cdot [2\nu\vec{D}(\vec{v})\vec{e}_3])_{\Gamma_I}\big], \quad \Pi_3\vec{v} = N_f[(\nu\Delta\vec{v} \cdot \vec{e}_3)_{\Gamma_f}].
\end{equation}
Finally, for $i=1,2,3$, we denote $G_i = -\nabla\Pi_i$, as invoked in \eqref{A}---for convenience, we will work directly with the $\Pi_i$ below. 
We then consider a Cauchy problem which captures the dynamics of the full Biot-Stokes system. 

Namely: find $\vec{y} = [\vec{u},\vec{w},p_b,\vec{v}]^T \in \mathcal{D}(\cA_{\delta})$ (resp. $X$) such that
\begin{equation}\label{cauchy}
   \dot{ \vec{y}}(t) = \cA_{\delta} \vec{y}(t)+\mathcal F(t); \quad \vec{y}(0) = [\vec{u}_0,\vec{u}_1,p_0,\vec{v}_0]^T \in \mathcal{D}(\cA_{\delta}) \text{ (resp. } X),
\end{equation}
where $\mathcal F(t) = [\mathbf 0, \mathbf F_b(t), S(t), \mathbf F_f(t)]^T$ (of appropriate regularity)
and the action of the  operator $\cA_{\delta}:\mathcal{D}(\cA_{\delta})\subset X \to X$ is provided by
\begin{equation}\label{A}
    \cA_{\delta} \equiv \begin{pmatrix}
        0 & \vec{I} & 0 & 0\\
        -\cE_0 & -\delta \cE_{0} & -\alpha\rho_b^{-1} \nabla & 0\\
        0 & -\alpha c_0^{-1} \nabla\cdot & -A_0 & 0\\
        0 & 0 & \rho_f^{-1}G_1 & \rho_f^{-1}[\nu\Delta + G_2 + G_3]
    \end{pmatrix}.
\end{equation}
The differential action of the generator $\mathcal A_\delta$ can be rewritten as: \begin{equation}\label{diffaction}
   \mathcal A_{\delta}\mathbf y\equiv  \begin{pmatrix}
        \vec{w}\\
        -\mathcal{E}_0\vec{u} -\delta \cE_{0}\vec{w}-\alpha \rho_b^{-1}\nabla p_b\\
        -\alpha c_0^{-1}\nabla\cdot \vec{w} -A_0p_b\\
        \rho_f^{-1}\nu \Delta \vec{v} - \rho_f^{-1}\nabla\pi(p_b,\bv)
    \end{pmatrix}
    =
    \begin{pmatrix}\bw_1^*\\\bw_2^*\\q^*\\\mathbf f^*\end{pmatrix} \in X, \quad \vec{y} = [\vec{u},\vec{w},p_b,\vec{v}]^T \in \mathcal{D}(\mathcal{A}_\delta),
\end{equation}
where we denote $\pi = \pi(p_b,\vec{v})\equiv \Pi_1p_b+\Pi_2 \mathbf v+\Pi_3\mathbf v \in L^2(\Omega_f)$ as a solution to the following elliptic problem, for given $p_b$ and $\vec{v}$ emanating from $\vec{y} \in \mathcal{D}(\mathcal{A}_{\delta})$ and thus having adequate regularity to form the RHS below:
\begin{equation}\label{pi}
    \begin{cases}
        \Delta \pi = 0 &\in L^2(\Omega_f),\\
        \partial_{\vec{e}_3}\pi = \nu \Delta \vec{v} \cdot\vec{e}_3& \in H^{-3/2}(\Gamma_f),\\
        \pi = p_b + 2\nu\vec{e}_3\cdot \vec{D}(\vec{v})\vec{e}_3  &\in H^{-1/2}(\Gamma_I),\\
        \pi  \in H^{-1/2}_{\#}(\partial \Omega_f).
    \end{cases}
\end{equation}
The above action then generates the  {\it elliptic}  system:
$$\begin{cases}
        -\mathcal{E}_{0}\bu-\delta \mathcal{E}_{0}\bw - \alpha\rho_{b}^{-1}\nabla p_b = \bw_2^\ast & \in \vec{L}^2(\Omega_b),\\
        -A_0p_b = q^\ast + \alpha c_{0}^{-1}\nabla\cdot{\bw_1^*} &\in L^2(\Omega_b),\\
        \rho_f^{-1}[\nu \Delta \vec{v} - \nabla\pi] = \mathbf f^\ast &\in \vec{V},\\
        \nabla\cdot\vec{v} = 0,\\
     p_b\big|_{\Gamma_b} = 0,\quad \vec{v}|_{\Gamma_f} = 0, \quad \vec{u}|_{\Gamma_b} = 0\\
        \vec{v}\cdot\vec{e}_3 +  k\partial_{\vec{e}_3} p_b = \vec{w}\cdot\vec{e}_3 &\in H^{-1/2}(\Gamma_I),\\
        \beta\vec{v}\cdot\btau +  \btau \cdot [2\nu\vec{D}(\vec{v})-\pi]\vec{e}_3 =  \beta \vec{w}\cdot\btau&\in H^{-1/2}(\Gamma_I),\\
        \sigma_b\vec{e}_3 = [2\nu\mathbf D(\bv)-\pi]\mathbf e_3 & \in \vec{H}^{-1/2}(\Gamma_I),\\
        p_b = - 2\nu\vec{e}_3\cdot\mathbf D(\bv)\mathbf e_3+\pi &\in H^{-1/2}(\Gamma_I).
    \end{cases} $$
Again, $\pi=\pi(p_b,\bv)$, as given in \eqref{pi}, and we recall that 
$$ \sigma_b = \sigma_b(\vec{u},\delta \bu_t, p_b) \equiv  \sigma^E(\vec{u}) +\delta \sigma^E(\bu_t)- \alpha p_b\vec{I}.$$

Thence we can define the operator domain of $\mathcal A_{\delta}$; all restrictions below are interpreted as traces, in the generalized sense.
\begin{definition}[Domain of $\cA_{\delta}$] \label{diffdomain} Let $\vec{y}\in X$. Then $\vec{y} = [\vec{u},\vec{w},p,\vec{v}]^T \in \mathcal{D}(\mathcal{A}_{\delta})$ if and only if the following bullets hold:
\begin{itemize}\setlength\itemsep{.03cm}
    \item $\vec{u},\vec w \in \vec{U}$  with  $\mathcal{E}_{0}(\vec{u}),\mathcal{E}_{0}(\vec{w}) \in \vec{L}^2(\Omega_b)$ (so that $[{\sigma_b(\vec{u}+\delta \vec{w})\mathbf n_f}]\big|_{\partial\Omega_b} \in \vec{H}^{-1/2}(\partial \Omega_b)$);
    
    \item $p \in  H_\#^1(\Omega_b)$ with $A_0p \in L^2(\Omega_b)$ (so that $\left.\partial_{\mathbf n_f}p\right|_{\partial\Omega_b} \in {H}^{-1/2}(\partial \Omega_b)$);
    
        \item $[{\sigma_b\mathbf n_f}] \in \vec{H}_{\#}^{-1/2}(\partial \Omega_b)$ (then  
       $\sigma^E(\bu+\delta \bw)\mathbf n_f\cdot \mathbf n_f \in H^{-1/2}_{\#}(\partial \Omega_b)$);
       \item  $\partial_{\mathbf n_f} p \in H^{-1/2}_{\#}(\partial \Omega_b)$
    
    \item $\vec{v} \in \vec{H}_\#^1(\Omega_f) \cap \vec{V}$ with $\vec{v}|_{\Gamma_f} = \vec{0}$;
    \item There exists $\pi \in L^2(\Omega_f)$ such that ~$\nu\Delta \vec{v} - \nabla\pi \in \vec{V},$ where $\pi$ is as in \eqref{pi} (and so
 $\left.\sigma_f\mathbf n_f\right|_{\partial \Omega_f}\in \vec{H}^{-\frac{1}{2}}(\partial\Omega_f)$ and  $\left.\pi\right|_{\partial \Omega_f} \in H^{-\frac{1}{2}}(\partial\Omega_f)$;
   
    \item $2\nu \mathbf D(\bv) \big|_{\partial \Omega_f} \in \mathbf H^{-1/2}_{\#}(\partial \Omega_f)$ and ~$\pi \big|_{\partial\Omega_f} \in H^{-1/2}_{\#}(\partial \Omega_f)$;
    
    \item  $\left.\Delta \vec{v}\cdot\mathbf n_f\right|_{\partial \Omega_f} \in H^{-3/2}(\partial\Omega_f)$);
    
    \item  $\restri{(\vec{v} - \vec{w})\cdot\vec{e}_3} = \restri{-k\nabla p\cdot\vec{e}_3} \in H^{-1/2}(\Gamma_I)$;
    
    \item $\restri{\beta(\vec{v}-\vec{w})\cdot\btau} = \restri{-\btau\cdot\sigma_f\vec{e}_3} \in H^{-1/2}(\Gamma_I)$;
    
    \item $\restri{-\vec{e}_3 \cdot \sigma_f\vec{e}_3} = \restri{p} \in H^{-1/2}(\Gamma_I)$;
    
    \item $\restri{\sigma_b\vec{e}_3} = \restri{\sigma_f\vec{e}_3} \in \vec{H}^{-1/2}(\Gamma_I)$.

\end{itemize}
\end{definition}
\noindent The above terms with  traces in  Sobolev spaces of negative indices are well-defined (as extensions) when $\vec{y} \in \mathcal{D}(\cA_{\delta})$.

We remark that this generator is the natural companion to that presented in \cite{AGW,AW} for the undamped ($\delta=0$) dynamics. Obtaining generation for this operator $\mathcal A_{\delta}$ defined above follows mutatis mutandis for the action and underlying spaces as the Biot-Stokes dynamics with $\delta=0$. This is tantamount to simply adding strong (visco-elastic) damping to an existing hyperbolic generator.

\end{document}